\newtheorem{theo}{Theorem}[section]
\newtheorem{prop}[theo]{Proposition}
\newtheorem{corol}[theo]{Corollary}
\newtheorem{lem}[theo]{Lemma}
\newtheorem{defin}[theo]{Definition}
\newtheorem{rem}[theo]{Remark}
\newtheorem{question}[theo]{Question}
\newcommand{\R}{\mathbb R}
\newcommand{\N}{\mathbb N}
\newcommand{\Z}{\mathbb Z}
\newenvironment{proof}{{\flushleft \em Proof : }}{\hfill $\square$ \vspace{5mm}}
\begin{document}

\title{Generic measures for geodesic flows on nonpositively curved manifolds}

\author{Yves Coud\`ene, Barbara Schapira}

\date{21 January 2014}

\maketitle
 
\begin{center}
\emph{Laboratoire de math\'ematiques, UBO,
6 avenue le Gorgeu, 29238 Brest, France}

\emph{LAMFA, UMR CNRS 7352, Universit\'e Picardie Jules Verne, 
      33 rue St Leu, 80000 Amiens, France}
\end{center}

\begin{abstract}
We study the generic invariant probability measures for the geodesic flow 
on connected complete nonpositively curved manifolds.  
Under a mild technical assumption, 
we prove that ergodicity is a generic property in the set of
probability measures defined on the unit tangent bundle of the manifold
and supported by trajectories not bounding a flat strip.
This is done by showing that Dirac measures on periodic orbits
are dense in that set.

In the case of a compact surface, we get the following sharp result: 
ergodicity is a generic property in the space of all invariant measures
defined on the unit tangent bundle of the surface 
if and only if there are no flat strips in the universal cover of the surface.

Finally, we show under suitable assumptions that generically, the invariant
probability measures have zero entropy and are not strongly mixing.
\footnote{37B10, 37D40, 34C28}{}
\end{abstract} 


\section{Introduction} 

Ergodicity is a generic property in the space of probability measures
invariant by a topologically mixing Anosov flow on a compact manifold. 
This result, proven by K. Sigmund in
the seventies \cite{si1},
implies that on a compact connected negatively curved manifold,
the set of ergodic measures is a dense
$G_\delta$ subset of the set of all probability measures invariant by the
geodesic flow. The proof of K. Sigmund's result is based on the specification 
property. This property relies on the uniform hyperbolicity of the system and
on the compactness of the ambient space.

\medskip

\quad
In \cite{CS}, we showed that ergodicity is a generic property of
hyperbolic systems without relying on the specification property.
As a result,  we were able to prove that the set of 
ergodic probability measures invariant by the geodesic flow, on a negatively
curved manifold, is a dense $G_\delta$ set, without any compactness
assumptions or pinching assumptions on the sectional curvature of the
manifold.

\medskip

\quad
A corollary of our result is the existence of ergodic invariant probability
measures of full support for the geodesic flow on any complete negatively
curved manifold, as soon as the flow is transitive. Surprisingly, we succeeded
in extending this corollary to the nonpositively curved setting.
However, the question of genericity in nonpositive curvature appears to be
much more difficult, even for surfaces. In \cite{CS2}, we gave examples of
compact nonpositively curved surfaces with negative Euler characteristic
for which ergodicity is not a generic property in
the space of probability measures invariant by the geodesic flow.  

\medskip

\quad
The first goal of the article is to obtain genericity results in the non
positively curved setting. From now on, all manifolds are assumed to be
\emph{connected, complete} Riemannian manifolds. Recall that a \emph{flat
  strip} in the universal cover of the manifold is a totally geodesic subspace
isometric to the space $[0,r]\times {\bf R}$, for some $r>0$, endowed with its
standard euclidean structure.  
We first show that if there are no flat strip, genericity holds.

\begin{theo}
Let $M$ be a nonpositively curved manifold, 
such that its universal cover has no flat strips. 
Assume that the geodesic flow has at least
three periodic orbit on the unit tangent bundle $T^1M$ of $M$. Then
the set of ergodic probability measures on $T^1M$ is a dense $G_\delta$-subset
 of the set of all probability measures invariant by the flow.
\end{theo}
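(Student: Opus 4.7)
The plan is to prove the stronger statement advertised in the abstract: Dirac measures on periodic orbits are weak-$*$ dense in the space $\mathcal{M}_{\mathrm{inv}}(T^1M)$ of invariant probability measures. Once this density is established, the theorem follows from two classical facts: periodic orbit measures are ergodic, and the set of ergodic measures is always a $G_\delta$ subset of $\mathcal{M}_{\mathrm{inv}}(T^1M)$ (Oxtoby--Parthasarathy).

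First I would reduce to approximating ergodic measures. By the ergodic decomposition together with Krein--Milman, every invariant probability measure lies in the closed convex hull of the ergodic ones. It therefore suffices to establish (i) every ergodic measure is a weak-$*$ limit of periodic orbit measures, and (ii) every finite convex combination of periodic orbit measures can itself be approximated by periodic orbit measures.

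The key analytic ingredient for (i) is a rank-one closing lemma in the spirit of Eberlein and Ballmann: under the no-flat-strip hypothesis every recurrent geodesic is of rank one, admits a local product structure, and any almost-closing orbit segment can be shadowed by a genuine periodic orbit of comparable length. Given an ergodic $\nu$, I would choose a $\nu$-generic vector $v$ and a time $T$ so large that the empirical measure of $\phi_{[0,T]}(v)$ is weak-$*$ close to $\nu$; Poincar\'e recurrence then supplies times $T_n \to \infty$ at which $\phi_{T_n}(v)$ returns close to $v$, and the closing lemma yields periodic orbits whose empirical distributions approximate $\nu$. Step (ii) is handled in the same spirit by splicing long arcs of several periodic orbits in the prescribed proportions, joining them via heteroclinic connections between their stable and unstable manifolds (which exist by transversality in the rank-one setting), and closing the resulting pseudo-orbit.

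The principal obstacle is that nonpositively curved manifolds without flat strips need not satisfy Bowen's specification property, so Sigmund's classical argument cannot be invoked directly; step (ii) has to be carried out by hand through the rank-one closing lemma, with careful control of the empirical distributions along the concatenated segments. The three-periodic-orbit hypothesis intervenes precisely here: in the rank-one setting, three distinct periodic orbits suffice to produce enough transverse heteroclinic intersections (a horseshoe-type mechanism) for the concatenation to go through, whereas with fewer periodic orbits the recurrent structure may be too rigid to sustain the approximation.
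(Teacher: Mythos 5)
Your proposal is correct and follows essentially the same route as the paper: Theorem 1.1 is the no-flat-strip case of Theorem 1.3, whose proof reduces, exactly as you outline, to the density of Dirac measures on periodic orbits (Proposition 2.4), established by applying an Eberlein-type closing lemma to a recurrent generic vector of an ergodic measure, and by splicing long arcs of periodic orbits via connecting orbits and a local product structure before closing the glued trajectory into a genuine periodic orbit. The one inaccuracy is your claim that the absence of flat strips makes every recurrent geodesic rank one: this is false in general (a recurrent, even periodic, vector can have higher rank without bounding a flat strip); what the paper uses instead, and what your argument actually needs, is Knieper's observation (Proposition 4.4) that a recurrent vector not bounding a flat strip lies in $\Omega_{hyp}$, together with Ballmann's weak local product structure (Lemma 4.5) and the closing lemma on $\Omega_{NF}$ (Lemma 4.6), all of which hold under the no-flat-strip hypothesis with no rank-one assumption. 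Similarly, the three-periodic-orbit hypothesis enters not through a horseshoe mechanism but simply to supply a third periodic orbit through which to route the connection when the two chosen periodic orbits are opposite to each other (Lemma 4.7).
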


This theorem is a particular case of theorem \ref{ergodicity-generic} below. 
In the two-dimensional compact case, we get the following sharp result.

\begin{theo}\label{ergodicity-generic-surfaces}
Let $M$ be a nonpositively curved compact orientable surface, with negative
Euler characteristic. Then ergodicity is a generic property in the set
of all invariant probability measures on $T^1M$ if and only if
there are no 
flat strips on the universal cover of $M$.
\end{theo}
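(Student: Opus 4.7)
\bigskip
\noindent\textbf{Proof plan.}

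The \emph{``if''} direction follows directly from the preceding theorem. A compact orientable surface $M$ of negative Euler characteristic has a nonabelian fundamental group with infinitely many conjugacy classes, so the geodesic flow on $T^1M$ carries infinitely many, in particular at least three, periodic orbits; under the no-flat-strip hypothesis the preceding theorem then yields the desired dense $G_\delta$ of ergodic probability measures.

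For the \emph{``only if''} direction, suppose a flat strip $F$ exists in the universal cover of $M$. By compactness of $M$, the stabilizer of $F$ in $\pi_1(M)$ is infinite cyclic, and $F$ projects to a compact invariant flat cylinder $C \subset T^1M$, foliated by a continuous one-parameter family $\{\gamma_s\}_{s \in [0,L]}$ of closed geodesics all of a common period $T$. The plan is to exhibit a weak-$*$ open set of invariant probability measures that contains no ergodic measure. Denote by $\gamma_0, \gamma_L$ the two boundary closed geodesics of $C$ and set $\mu_0 = \frac{1}{2}(\delta_{\gamma_0} + \delta_{\gamma_L})$. Choose continuous bump functions $f, g : T^1M \to [0,1]$ with disjoint supports contained in small neighborhoods of $\gamma_0$ and $\gamma_L$ respectively, each equal to $1$ on the respective geodesic, and arranged so that their traces on $C$ are disjoint narrow tubular bands around $\gamma_0$ and $\gamma_L$ inside $C$. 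For small $\varepsilon > 0$ the set
\[
V = \left\{ \mu : \left| \int f\, d\mu - \frac{1}{2} \right| < \varepsilon, \ \left| \int g\, d\mu - \frac{1}{2} \right| < \varepsilon \right\}
\]
is a weak-$*$ neighborhood of $\mu_0$.

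It remains to show that $V$ contains no ergodic measure. Let $\nu \in V$ be ergodic. Since $C$ is closed and invariant, one has $\nu(C) \in \{0,1\}$. If $\nu(C) = 1$, the product structure of the geodesic flow on the flat cylinder forces $\nu = \delta_{\gamma_s}$ for some $s \in [0,L]$; but the disjointness of the $f$- and $g$-bands on $C$ then implies that at most one of $\int f\, d\nu$ and $\int g\, d\nu$ is nonzero, contradicting $\nu \in V$. The remaining case $\nu(C) = 0$ is the main obstacle: such a $\nu$ would charge small neighborhoods of $\gamma_0$ and $\gamma_L$ with mass approximately $\frac{1}{2}$ each while avoiding $C$ itself. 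The strategy here exploits the two-dimensional geometry: near a closed geodesic bounding a flat strip on a surface, the strong-stable and strong-unstable manifolds collapse on the flat side, so any orbit lingering near $\gamma_0$ without meeting $C$ must asymptotically align with a single one-dimensional leaf. A controlled excursion argument, in the spirit of the counterexamples of \cite{CS2}, should then show that no ergodic orbit can transit between such one-dimensional branches of $\gamma_0$ and $\gamma_L$ with the prescribed asymptotic frequencies. This last step is where I expect the main technical difficulty to lie.
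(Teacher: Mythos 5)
Your ``if'' direction is fine and is exactly the paper's route: no flat strips gives $\Omega_{NF}=\Omega=T^1M$ on a compact surface, negative Euler characteristic supplies the periodic orbits, and Theorem \ref{ergodicity-generic} applies. The first genuine gap is in the ``only if'' direction, at the very first step: you assert that by compactness of $M$ the stabilizer of the flat strip $F$ in $\pi_1(M)$ is infinite cyclic, so that $F$ projects to a compact invariant flat cylinder. Compactness of $M$ gives no such thing: the stabilizer of a given flat strip may be trivial, and the strip may project to a nonperiodic, non-closed strip in $M$. Closing precisely this gap is the content of the paper's Proposition \ref{nonperiodic-flatstrip} (a compact orientable nonpositively curved surface with a nonperiodic flat strip also has a periodic one), and its proof is a nontrivial recurrence argument in the style of Cao--Xavier: one takes a strip of width $R$ close to the supremum $R_{max}$ of widths of nonperiodic strips, uses compactness to make it recur near a limiting strip, shows the return angle $\theta$ is nonzero, and uses the returning strip to produce flat rectangles of width $\geq 3R/2$ and length $L=R/\sin\theta\to\infty$, which accumulate on a strip of width $3R/2>R_{max}$, necessarily periodic. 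Note the conclusion is the existence of \emph{some} periodic strip, not the periodicity of the strip you started with, so your subsequent construction cannot simply be run on $F$ itself.

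Even granting a periodic flat cylinder $C$, your obstruction argument is incomplete and is aimed at the wrong place. You leave the case $\nu(C)=0$ open, and that case is not a technicality: the boundary geodesics $\gamma_0,\gamma_L$ of a maximal flat cylinder are adjacent to the non-flat part of the surface and are typically accumulated by periodic orbits living in that part (see the examples in section \ref{examples}, where the boundary geodesics of the flat cylinders are accumulated by periodic geodesics turning around the cylinders in the negatively curved region); one then expects periodic orbits splitting their time between neighbourhoods of $\gamma_0$ and of $\gamma_L$, i.e.\ ergodic Dirac measures inside your neighbourhood $V$, so your claim that no ergodic orbit can transit with the prescribed frequencies is not only unproved but likely false in general. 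The robust obstruction, which is the one the paper invokes via \cite{CS2}, sits at a periodic orbit in the \emph{middle} of the strip: an orbit not parallel to the core crosses the cylinder at a small angle and spends only a fraction of order $\varepsilon/R$ of each crossing within $\varepsilon$ of the central geodesic, so an ergodic measure giving mass close to $1/2$ to a small neighbourhood of the central orbit must be a Dirac measure on one of the parallel closed geodesics, and a measure such as $\frac{1}{2}\delta_{\mathrm{central}}+\frac{1}{2}\nu$ with $\nu$ carried away from the cylinder cannot be approximated by ergodic measures. In short, both halves of your ``only if'' plan need the inputs the paper supplies: Proposition \ref{nonperiodic-flatstrip} to get a periodic strip, and the results of \cite{CS2} (or an argument of the above type at the central orbit) to defeat genericity.
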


In the higher dimensional case, the situation is more complicated.
Under some technical assumption,
we prove that genericity holds in restriction to the set of
nonwandering vectors whose lifts do not bound a flat strip.

\begin{theo}\label{ergodicity-generic} 
Let $M$ be a connected, complete, nonpositively curved manifold, 
and $T^1M$ its unit tangent bundle. 
Denote by $\Omega\subset T^1M$ the nonwandering set of the
geodesic flow, and $\Omega_{NF }\subset \Omega$ the set of nonwandering
vectors that do not bound a flat strip.
Assume that $\Omega_{NF}$ is open in $\Omega$, and contains at least three
different periodic orbits of the geodesic flow. 

Then the set of ergodic probability measures invariant by the 
geodesic flow and with full support in $\Omega_{NF}$ is a $G_\delta$-dense
subset of the set of invariant probability measures on $\Omega_{NF}$.  
\end{theo}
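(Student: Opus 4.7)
The plan is to follow the Sigmund-style genericity paradigm in the adapted form developed in \cite{CS}. Write $\mathcal{M}$ for the set of invariant Borel probability measures supported on $\Omega_{NF}$, equipped with the weak-$\star$ topology. Since $\Omega_{NF}$ is open in $\Omega$, $\mathcal{M}$ is a $G_\delta$ subset of the Polish space of invariant probability measures on $\Omega$, hence itself a Baire space. The strategy is then to prove two separate genericity statements: (a) the subset $\mathcal{E}\subset\mathcal{M}$ of ergodic measures is a dense $G_\delta$, and (b) the subset $\mathcal{S}\subset\mathcal{M}$ of measures of full support in $\Omega_{NF}$ is a dense $G_\delta$. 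A Baire intersection will then give the theorem.

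The $G_\delta$ part is essentially classical. Ergodicity is equivalent to $L^2$-convergence of Birkhoff averages toward space averages; taking a countable dense family $(f_n)$ of bounded continuous functions, one writes $\mathcal{E}=\bigcap_{n,k}\bigcup_T \mathcal{O}_{n,k,T}$, where each $\mathcal{O}_{n,k,T}=\{\mu : \|B_T f_n-\int f_n\,d\mu\|_{L^2(\mu)}^2<1/k\}$ is open. For $\mathcal{S}$, the lower semicontinuity of $\mu\mapsto\mu(U)$ on open $U$ makes $\{\mu:\mu(U)>0\}$ open, so $\mathcal{S}=\bigcap_n\{\mu:\mu(U_n)>0\}$ for a countable basis $(U_n)$ of the topology of $\Omega_{NF}$ is $G_\delta$.

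The heart of the proof is density. The key lemma I would establish is that Dirac-type measures supported on periodic orbits contained in $\Omega_{NF}$ are weak-$\star$ dense in $\mathcal{M}$. Given $\mu\in\mathcal{M}$, I would pass to the ergodic decomposition, apply Birkhoff's theorem to obtain long orbit pieces whose empirical measures $\frac{1}{T}\int_0^T\delta_{\phi_s v}\,ds$ approximate ergodic components of $\mu$, and then invoke a rank-one closing lemma of Ballmann--Eberlein type to replace each piece by a nearby genuinely periodic orbit. The no-flat-strip condition is exactly what unlocks such closing, since it forces stable and unstable horospheres to cross transversally along the orbit; openness of $\Omega_{NF}$ in $\Omega$ is what guarantees that the shadowing periodic orbit still lies in $\Omega_{NF}$. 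Density of $\mathcal{E}$ follows at once because periodic measures are ergodic. For density of $\mathcal{S}$ I would combine the three given periodic orbits with the density of periodic measures and take convex combinations whose supports fill a prescribed dense subset of $\Omega_{NF}$; a diagonal argument then produces approximants of any $\mu$ by invariant measures of full support.

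The main obstacle will be the closing step. In contrast to the uniformly hyperbolic situation of \cite{si1}, closing in nonpositive curvature is not automatic and rests delicately on the rank-one behaviour supplied by the no-flat-strip hypothesis together with the openness of $\Omega_{NF}$; the general noncompact, unpinched setting of the theorem forces careful local use of the stable and unstable horospherical structure along orbits of $\Omega_{NF}$. The three-periodic-orbit assumption is there precisely to exclude the degenerate configurations in which $\mathcal{M}$ sits on one or two orbits, where an ergodic measure of full support cannot exist.
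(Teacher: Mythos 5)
Your overall architecture (two dense $G_\delta$'s, Baire intersection, with density of periodic measures as the key lemma) is the same as the paper's, and your $G_\delta$ arguments and your use of the closing lemma with openness of $\Omega_{NF}$ to keep the shadowing orbit inside $\Omega_{NF}$ are correct. But there is a genuine gap in your proof of the key density lemma. Your argument --- ergodic decomposition, Birkhoff, then closing each long orbit piece --- only shows that an \emph{ergodic} measure on $\Omega_{NF}$ is approximated by a periodic measure, hence that \emph{finite convex combinations} of periodic measures are dense in $\mathcal{M}$. It does not show that a single periodic measure approximates an arbitrary $\mu$, and in particular the assertion ``density of $\mathcal{E}$ follows at once because periodic measures are ergodic'' is unjustified: a convex combination $\sum_i c_i\,\delta_{x_i}$ of distinct periodic measures is not ergodic, and closing the pieces separately never merges them into one orbit. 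The missing step --- which is the heart of the paper's Section 4 --- is the specification-like gluing argument: one proves transitivity of the flow on $\Omega_{NF}$, uses it to find connecting orbits between consecutive periodic orbits $x_1,x_3,\dots$, glues long windings around each $x_{2i+1}$ to these connectors via the (weak) local product structure, checks that the resulting orbit is nonwandering and lies in $T_{hyp}$ (the product $\langle v,w\rangle$ of two vectors of $\Omega_{NF}$ need not lie in $\Omega_{NF}$, only in $T_{hyp}$ when $v,w\in T_{hyp}$, so this verification is not automatic), and only then applies the closing lemma to produce one periodic orbit whose Dirac measure is close to $\sum_i c_i\,\delta_{x_{2i+1}}$.

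This also corrects your reading of the hypothesis of three periodic orbits in $\Omega_{NF}$: its role is not merely to rule out a degenerate $\mathcal{M}$, but to make the transitivity lemma work (when the target periodic vector is the reverse of the source one, a third periodic orbit not bounding a flat strip is needed to build the connecting chain), and transitivity is in turn what drives both the gluing step above and the density of full-support measures. Without these ingredients the theorem's density claims do not follow from what you have written; with them, your outline coincides with the paper's proof.
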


The assumption that $\Omega_{NF}$ is open in $\Omega$ is satisfied
in many examples. For instance, it is true as soon as the number
of flat strips on the manifold is finite.
The set of periodic orbits of the geodesic flow is in $1-1$-correspondence
with the set of oriented closed geodesics on the manifold. Thus, the
assumption that $\Omega_{NF }$ contains at least three different periodic
orbits means that there are at least two distinct nonoriented 
closed geodesics in the manifold that do not lie in the projection 
of a flat strip.
This assumption rules out a few uninteresting examples, such as 
simply connected manifolds or cylinders, and corresponds to the
classical assumption of nonelementaricity in negative curvature.

\medskip 

\quad Whether ergodicity is a generic property in the space of all invariant
measures, in presence of flat strips of intermediate dimension, is still an
open question. In section \ref{examples}, we will see examples with periodic
flat strips of maximal dimension where ergodicity is not generic.

\bigskip

The last part of the article is devoted to mixing and entropy.
Inspired by results of \cite{ABC}, we 
study the genericity of other dynamical properties of measures, 
as zero entropy or mixing. In particular, we prove that

\begin{theo}\label{zero-entropy-and-mixing}
Let $M$ be a connected, complete, nonpositively curved manifold, 
such that $\Omega_{NF}$ contains
at least three different periodic orbits of the geodesic flow and 
is open in the nonwandering set $\Omega$. 

The set of invariant probability measures with zero entropy
for the geodesic flow is generic in the set  of invariant
probability measures on  $\Omega_{NF}$. 
Moreover, the set of invariant probability measures on $\Omega_{NF}$ 
that are not strongly mixing is a generic set. 
\end{theo}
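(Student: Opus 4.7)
\medskip

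The plan is to follow the general scheme of Sigmund and Abdenur--Bonatti--Crovisier \cite{si1,ABC}: both properties hold for every periodic orbit measure, and the proof of Theorem~\ref{ergodicity-generic} (via the weak-$*$ density of Dirac measures on periodic orbits announced in the abstract) provides density of such measures in the space $\mathcal{M}(\Omega_{NF})$ of invariant probability measures supported on $\Omega_{NF}$. It then suffices to show that each of the two properties is satisfied on some dense $G_\delta$ subset of $\mathcal{M}(\Omega_{NF})$, and to invoke Baire.

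For the non-mixing part, first fix a bounded continuous function $f_0$ on $\Omega_{NF}$ that is non-constant on every periodic orbit; such a function can be built directly, since the set of periodic orbits is countable and each one is a one-dimensional compact submanifold. For each rational $\epsilon > 0$ consider
$$N_\epsilon \;=\; \bigcap_{n \geq 1}\bigcup_{t \geq n}\Bigl\{\mu : \Bigl|\int f_0 \circ \phi_t \cdot f_0\,d\mu - \Bigl(\int f_0\,d\mu\Bigr)^{\!2}\Bigr| > \epsilon\Bigr\}.$$
Continuity of $\mu \mapsto \int f_0 \circ \phi_t \cdot f_0 \,d\mu$ and $\mu \mapsto \int f_0 \,d\mu$ in the weak-$*$ topology makes $N_\epsilon$ a $G_\delta$. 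If $\mu_O$ is the normalized Lebesgue measure on a periodic orbit of period $T_O$, then at $t = k T_O$ the correlation equals $\int f_0^{\,2}\,d\mu_O$, so the deviation is exactly $\mathrm{Var}_{\mu_O}(f_0) > 0$ by the choice of $f_0$. Density of $N_{\epsilon_0}$ for some small $\epsilon_0 > 0$ then follows from the density of periodic measures, with a short convex-combination perturbation to handle the exceptional case $\mathrm{Var}_\mu(f_0) = 0$.

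For the zero entropy part, density of the zero-entropy set is again immediate since periodic measures have entropy zero. To show $\{\mu : h_\mu(\phi) = 0\}$ is $G_\delta$, I would write it as $\bigcap_n \{\mu : h_\mu(\phi) < 1/n\}$, fix a countable sequence $(\xi_k)$ of finite partitions of $\Omega_{NF}$ of decreasing diameter whose boundaries have zero measure for a prescribed countable dense subset of $\mathcal{M}(\Omega_{NF})$, and use the Kolmogorov--Sinai identity $h_\mu(\phi) = \sup_k h_\mu(\phi, \xi_k)$ together with continuity of $\mu \mapsto \frac{1}{n} H_\mu(\xi_k^{\,n})$ at measures with $\mu(\partial \xi_k) = 0$ to present each $\{h_\mu(\phi) < 1/n\}$ as a $G_\delta$.

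The hard part is the entropy step. In our setting $\mu \mapsto h_\mu(\phi)$ is genuinely not upper semicontinuous: in the no-flat-strip case, Bowen--Margulis measures of negatively curved quotients lie in $\mathcal{M}(\Omega_{NF})$, carry positive entropy, and are weak-$*$ limits of zero-entropy periodic measures. So the zero-entropy set is not closed, and the $G_\delta$ presentation via null-boundary partitions must be done carefully --- choosing the partitions $(\xi_k)$ so that the null-boundary condition holds along a dense set of invariant measures is the main technical point. The non-mixing part, by contrast, reduces to the explicit correlation computation above along multiples of the period of the approximating periodic orbit.
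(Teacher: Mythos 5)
Your overall plan is the paper's (both properties hold for periodic measures, which are dense by Proposition~\ref{densite-orbites-periodiques}), but as written the non-mixing half has a genuine gap. For a fixed $\epsilon_0$ the set $N_{\epsilon_0}$ cannot be dense unless \emph{every} invariant measure $\mu$ satisfies $\mathrm{Var}_\mu(f_0)\ge\epsilon_0$: by invariance and Cauchy--Schwarz, $|\int f_0\circ\phi_t\, f_0\,d\mu-(\int f_0\,d\mu)^2|\le \mathrm{Var}_\mu(f_0)$ for all $t$, so $N_{\epsilon_0}$ is contained in the closed set $\{\mu:\ \mathrm{Var}_\mu(f_0)\ge\epsilon_0\}$ (the variance is weak-$*$ continuous since $f_0$ is bounded continuous), and any invariant measure with variance below $\epsilon_0$ lies outside the closure of $N_{\epsilon_0}$. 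Your choice of $f_0$ only guarantees \emph{positive} variance on each periodic orbit, not a uniform lower bound, and in this noncompact setting no uniform bound is available; the proposed ``convex-combination perturbation'' cannot repair this, because any measure weak-$*$ close to a small-variance measure again has small variance, hence small correlation deviation at \emph{every} time. Two ways to close the gap: either make the threshold measure-dependent, e.g. replace $N_{\epsilon_0}$ by $\{\mathrm{Var}_\mu(f_0)>0\}\cap\bigcap_{n}\bigcup_{t\ge n}\{\mu:\ |\int f_0\circ\phi_t f_0\,d\mu-(\int f_0\,d\mu)^2|>\tfrac12\mathrm{Var}_\mu(f_0)\}$, which is still a $G_\delta$, consists of non-mixing measures, and contains every periodic measure; or argue as the paper does, following Parthasarathy: show the strongly mixing measures are contained in a countable union of closed sets indexed by pairs of closed sets $F_1,F_2$ and rational parameters $\varepsilon,\eta,r,m$, each of empty interior because a periodic measure $\mu_0$ of period $l$ satisfies $\mu_0(G_n\cap\phi^{jl}G_n)=\mu_0(G_n)$, and the constraints $\mu_0(F_1)\ge\varepsilon$, $\mu_0(F_2)\ge\varepsilon$ of the set itself force the deviation to be at least $\varepsilon(1-\varepsilon)>\eta$; there the lower bound comes from the set's own parameters, so no test function and no uniform threshold are needed.

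The zero-entropy half is a sketch rather than a proof: you yourself flag ``choosing the partitions $(\xi_k)$ so that the null-boundary condition holds along a dense set of invariant measures'' as the main technical point and leave it open, and that is exactly where the content lies. The paper settles it with a simple geometric observation: take partitions generated by lifts $T^1B$ of small balls $B\subset M$ (radius below the injectivity radius); a geodesic meets the boundary sphere of such a ball in at most two points, so every Dirac measure on a periodic orbit gives zero mass to all partition boundaries, and the dense family along which the finite-time entropies converge is precisely the family of periodic measures provided by Proposition~\ref{densite-orbites-periodiques}. A further caveat: the assertion that each sublevel set $\{\mu:\ h_\mu(\phi)<1/n\}$ is a $G_\delta$ does not follow from continuity of $\mu\mapsto\frac1n H_\mu(\xi_k^{\,n})$ at null-boundary measures alone (entropy is not upper semicontinuous here, as you note); the paper instead uses that the limit defining $h_\mu(\phi^1,\mathcal{P}_k)$ exists, hence can be replaced by a liminf, and writes the zero-entropy set as $\cap_k\cap_r\cap_m\cup_{n\ge m}$ of sets in which the finite-time entropy $\frac{1}{n+1}H_\mu(\vee_{j=0}^n g^{-j}\mathcal{P}_k)$ is compared with that of a fixed periodic reference measure $\mu_0$ satisfying $\mu_0(\partial\mathcal{P}_k)=0$. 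Without this device, or an equivalent one, the $G_\delta$ presentation you invoke is not established; density, as you say, is immediate from the periodic measures.
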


The assumptions in all our results include  the  case where $M$ 
is a noncompact negatively curved manifold. 
In this situation, we have $\Omega=\Omega_{NF}$. 
Even in this case, theorem \ref{zero-entropy-and-mixing} is new. 
When $M$ is a compact negatively curved manifold, it follows 
from \cite{si1}, \cite{Pa2}.
Theorem \ref{ergodicity-generic} was proved in \cite{CS} in the negative
curvature case. 

Results above show that under our assumptions, ergodicity is generic, and
strong mixing is not. We don't know under which condition weak-mixing is a
generic property, except for compact negatively curved manifolds \cite{si1}.
In contrast, topological mixing holds most of the time, and is equivalent to
the non-arithmeticity of the length spectrum (see proposition
\ref{mixing-non-arithmeticity}).

\bigskip 

In section \ref{invariant}, we recall basic facts on nonpositively curved
manifolds and define interesting invariant sets for the geodesic flow. 
In section \ref{Proof-a-la-Cao-Xavier}, we study the case of surfaces.
The next section is devoted to the proof
of theorem  \ref{ergodicity-generic}. 
At last, we prove theorem \ref{zero-entropy-and-mixing} in
sections \ref{section-entropy} and \ref{mixing}. 
 
\bigskip 

During this work, the authors benefited from the ANR grant ANR-JCJC-0108 Geode.


\section{Invariant sets for  the geodesic flow on nonpositively 
curved manifolds}\label{invariant}

Let $M$ be a Riemannian manifold with nonpositive curvature, and let $v$ be a
vector belonging to the unit tangent bundle $T^1M$ of $M$. The vector $v$ is a
\emph{rank one vector}, if the only parallel Jacobi fields along the geodesic
generated by $v$ are proportional to the generator of the geodesic flow. A
connected complete nonpositively curved manifold is a \emph{rank one
manifold} if its tangent bundle admits a rank one vector. In that case, the
set of rank one vectors is an open subset of $T^1M$. Rank one vectors
generating closed geodesics are precisely the hyperbolic periodic points of
the geodesic flow. We refer to the survey of G. Knieper \cite{kni} and the
book of W. Ballmann \cite{ballmann} 
for an overview of the properties of rank one manifolds.

\medskip

Let $X\subset T^1M $ be an invariant set under the action 
of the geodesic flow $(g_t)_{t\in\R}$. 
Recall that the strong stable sets of the flow on $X$ are defined by~:

\smallskip

\noindent
 $W^{ss}(v):=\lbrace \  w\in X \ | \ 
 \lim_{t\rightarrow \infty}
 d(g_{t}(v),g_{t}(w)) = 0 \ \  \rbrace$ ; 
 \\
 $W^{ss}_{\varepsilon}(v):=\lbrace \  w\in W^{ss}(v) \ | \ 
 d(g_{t}(v),g_{t}(w))\leq \varepsilon$ for all $t\ge 0 
 \ \  \rbrace$.

\smallskip

One also defines the strong unstable sets $W^{su}$ and
$W^{su}_{\varepsilon}$ of $g_t$ ; these are the stable sets of
$g_{-t}$.

\medskip

Denote by $\Omega\subset T^1 M$ the 
nonwandering set of the geodesic flow, that
is the set of vectors $v\in T^1M$ such that 
for all neighbourhoods $V$ of $v$,
there is a sequence $t_n\to \infty$ , 
with $g^{t_n}V\cap V\neq \emptyset$. 
Let us introduce several interesting invariant subsets
of the nonwandering set $\Omega$ of the geodesic flow. 

\begin{defin}Let $v\in T^1M$.
We say that {\em its strong stable (resp. unstable) manifold coincides 
with its strong stable (resp. unstable) horosphere} if, for any lift 
$\tilde{v}\in T^1\tilde{M}$ of $v$,  
for all $\tilde{w}\in T^1\tilde{M}$, 
the existence of a constant $C>0$ s.t.
 $d(g^t\tilde{v}, g^t\tilde{w})\le C$ for
 all $t\ge 0$ 
(resp. $t\le 0$) implies that there exists $\tau\in\R$ such that 
$d(g^tg^\tau\tilde{v}, g^t\tilde{w})\to 0$ 
when $t\to +\infty$ 
(resp. $t\to -\infty$). 
\end{defin}

Denote by $T_{hyp}^+\subset T^1M$ (resp. $T_{hyp}^-$) the set of vectors whose
stable (resp. unstable) manifold coincides with its stable (resp. unstable)
horosphere, $T_{hyp}=T_{hyp}^+\cap T_{hyp}^-$ and $\Omega_{hyp}=\Omega\cap
T_{hyp}$.

The terminology comes from the fact that on $\Omega_{hyp}$, a lot of
properties of a hyperbolic flow still hold. However, periodic orbits in
$\Omega_{hyp}$ are not necessarily hyperbolic in the sense that they can have
zero Lyapounov exponents, for example higher rank periodic vectors.

\begin{defin} Let $v\in T^1M$. 
We say that {\em $v$ does not bound a flat strip}
if no lift $\tilde{v}\in T^1\tilde{M}$ of $v$ determines a geodesic
which bounds an infinite flat (euclidean) strip isometric 
to $[0,r]\times\R$, $r>0$, on $T^1\tilde{M}$.

The projection of a flat strip on the manifold $M$ 
is called a \emph{periodic flat strip} if it contains a periodic geodesic. 

We say that {\em $v$ is not contained in a periodic flat strip} 
if the geodesic determined by $v$  on $M$  does not stay 
in a periodic flat strip for all $t\in\R$.  
\end{defin}

In \cite{CS}, we restricted the study of the dynamics to the set $\Omega_1$ of
nonwandering rank one vectors whose stable (resp. unstable) manifold coincides
with the stable (resp. unstable) horosphere. If $\mathcal{R}_1$ denotes 
the set of rank one vectors, then $\Omega_1=\Omega_{hyp}\cap \mathcal{R}_1$. 
The dynamics on $\Omega_1$ is very close from the dynamics
of the geodesic flow on a negatively curved manifold, 
but this set is not very natural, and too small in general.
We improve below our previous results,
by considering the following larger sets: 
\begin{itemize}
\item[$\bullet$]
the set $\Omega_{NF}$ of nonwandering vectors that do not bound a flat strip, 
\item[$\bullet$] 
the set $\Omega_{NFP}$ of nonwandering vectors that are not contained in 
a periodic flat strip, 
\item[$\bullet$] the set $\Omega_{hyp}$ of nonwandering vectors 
whose stable (resp. unstable) manifold coincides with the stable horosphere. 
\end{itemize}

We have the inclusions 
$$
\Omega_1\subset\Omega_{hyp}\subset 
\Omega_{NF}\subset\Omega_{NFP}\subset \Omega\,,
$$ 
and they can be strict, except if $M$ has negative curvature, in which case
they all coincide. Indeed, a higher rank periodic vector is not in $\Omega_1$,
but it can be in $\Omega_{hyp}$ when it does not bound a flat strip of
positive width. A rank one vector whose geodesic is asymptotic to a flat
cylinder is in $\Omega_{NF}$ but not in $\Omega_{hyp}$. 

\begin{question}\rm 
It would be interesting to understand when we have the equality 
$\Omega_{NF}=\Omega_{NFP}$.
We will show that  on 
compact rank one surfaces, if there is a flat strip, then there exists also a periodic flat strip. 
When the surface is a flat torus, 
 we have of course $\Omega_{NF}=\Omega_{NFP}=\emptyset$. 

It could also  happen on some noncompact rank one manifolds 
that all vectors that bound a nonperiodic flat
 strip are wandering, so that $\Omega_{NF}=\Omega_{NFP}$. 

Is it true on all rank-one surfaces, and/or all rank-one compact manifolds, 
that $\Omega_{NF}=\Omega_{NFP}$~? 
\end{question}

In the negative curvature case, it is standard to assume  the fundamental group of $M$ to be \emph{nonelementary}. This means
that there exists at least two (and therefore infinitely many) 
closed geodesics on $M$, and therefore at least four (and in fact 
infinitely many) periodic orbits of the geodesic flow
on $T^1M$ (each closed geodesic lifts to $T^1M$ into
two periodic curves, one for each orientation). 
This allows to discard simply connected manifolds or hyperbolic cylinders,
for which there is no interesting recurring dynamics.

In the nonpositively curved case, we must also get rid of 
flat euclidean cylinders, for which there are infinitely many
periodic orbits, but no other recurrent trajectories.
So we will assume that there exist at least three different periodic
orbits in $\Omega_{NF}$, that is, two distinct closed
geodesics on $M$ that do not bound a flat strip. 

\medskip

We will need another stronger assumption, on the flats of the manifold. 
{\em To avoid to deal with flat strips, we will  
 work in restriction to $\Omega_{NF}$, 
with the additional assumption that $\Omega_{NF}$ is open in $\Omega$.}
This is satisfied for example if  $M$ admits
only finitely many flat strips.
We will see that this assumption insures that the periodic orbits
that do not bound a flat strip are dense in $\Omega_{hyp}$ and $\Omega_{NF}$.

In the proof of theorems  \ref{ergodicity-generic}
and \ref{zero-entropy-and-mixing}, the key step is the proposition below.

\begin{prop}\label{densite-orbites-periodiques}
Let $M$ be a connected, complete, nonpositively curved manifold, 
which admits at least three different periodic orbits that do not
bound a flat strip. Assume 
that $\Omega_{NF}$ is open in $\Omega$. 
Then the Dirac measures supported by the periodic orbits
of the geodesic flow $(g^t)_{t\in\R}$  that are in $\Omega_{NF}$, 
are dense in the set of all invariant probability measures defined 
on $\Omega_{NF}$.  
\end{prop}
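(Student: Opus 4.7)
The plan is to combine three ingredients: ergodic decomposition, a rank-one closing lemma à la Ballmann, and a specification-like concatenation of periodic orbits through heteroclinic connections, following the scheme of \cite{CS} but adapted to $\Omega_{NF}$.

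First I would reduce the problem to an ergodic approximation. The set of invariant probability measures on $\Omega_{NF}$ is a compact convex Choquet simplex whose extreme points are exactly the ergodic measures; by the ergodic decomposition theorem, finite convex combinations of ergodic measures are weak-$*$ dense. So it is enough to prove two claims: (a) every ergodic measure supported in $\Omega_{NF}$ is a weak-$*$ limit of periodic orbit measures in $\Omega_{NF}$; (b) every finite convex combination of such periodic orbit measures is itself a weak-$*$ limit of a single periodic orbit measure in $\Omega_{NF}$.

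For step (a), pick a Birkhoff-generic point $v\in\Omega_{NF}$ for the ergodic measure $\mu$. Every vector of $\Omega_{NF}$ is rank one (a non-trivial orthogonal parallel Jacobi field would produce a flat strip), so $v$ is rank one. Poincaré recurrence supplies times $T_n\to\infty$ with $g^{T_n}v$ arbitrarily close to $v$, and Ballmann's rank-one closing lemma then yields periodic orbits $p_n$ of period close to $T_n$ whose orbits $C^0$-shadow the segment $\{g^s v:0\le s\le T_n\}$. Since $\Omega_{NF}$ is open in $\Omega$ and $p_n\in\Omega$ lies arbitrarily close to $v\in\Omega_{NF}$, each $p_n$ sits in $\Omega_{NF}$ for large $n$. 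By Birkhoff, the normalised empirical measures along $(g^sv)_{0\le s\le T_n}$ converge weak-$*$ to $\mu$, and shadowing transfers this convergence to the orbit measures of the $p_n$.

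Step (b) is where the main technical work lies. Given periodic orbits $\mathcal{O}_{p_1},\ldots,\mathcal{O}_{p_k}\subset\Omega_{NF}$ and rational weights $\alpha_i$, I would build a single periodic orbit spending a fraction $\approx\alpha_i$ of its time near $\mathcal{O}_{p_i}$. Each $p_i$ being rank one, its strong stable and unstable manifolds are nontrivial; the assumption that $\Omega_{NF}$ contains at least three distinct periodic orbits plays the role of nonelementarity, and a classical Eberlein--Ballmann argument produces transverse heteroclinic orbits connecting $\mathcal{O}_{p_i}$ to $\mathcal{O}_{p_{i+1}}$. Concatenating, for each $i$, a segment that winds $N\alpha_i$ times along $\mathcal{O}_{p_i}$ with a short heteroclinic piece gives an $\varepsilon$-pseudo-orbit that closes up to within $\varepsilon$. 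A last application of the rank-one closing lemma turns this pseudo-orbit into a genuine periodic orbit, which again lies in $\Omega_{NF}$ by the openness hypothesis. Letting $N\to\infty$ and $\varepsilon\to 0$ yields the approximation. The principal obstacle is precisely this step: without the specification property, the long concatenations must be built by hand, and one must verify that the final closed-up periodic orbit does not accidentally fall into a flat strip. Both this verification and the shadowing argument in (a) rely crucially on the hypothesis that $\Omega_{NF}$ is open in $\Omega$, which is why that hypothesis figures prominently in the statement.
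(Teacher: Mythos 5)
Your overall scheme (reduce via ergodic decomposition, approximate ergodic measures using a closing lemma plus the openness of $\Omega_{NF}$ in $\Omega$, then realise convex combinations by winding around periodic orbits and closing up) is the same as the paper's, but two of your steps rest on claims that fail in this setting. First, it is not true that every vector of $\Omega_{NF}$ is rank one: a nontrivial perpendicular parallel Jacobi field does not in general produce an isometrically embedded flat strip, and the paper explicitly allows higher-rank vectors (e.g.\ higher-rank periodic vectors) that do not bound a flat strip of positive width. So you cannot invoke ``Ballmann's rank-one closing lemma'' for a generic recurrent vector of $\mu$, nor argue that each $p_i$ is rank one with nontrivial strong stable/unstable manifolds. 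The paper instead proves a closing lemma directly on $\Omega_{NF}$ (lemma \ref{weak-closing-lemma}), whose only geometric input is that the limiting vector does not bound a flat strip (Ballmann's lemma 3.1) together with the openness of $\Omega_{NF}$ in $\Omega$; and it replaces ``rank one'' by membership in $\Omega_{hyp}$, which holds for recurrent and periodic vectors of $\Omega_{NF}$ by proposition \ref{recurrents-sont-bons} and is what makes the connecting orbits of lemma \ref{transitivity} available.

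Second, and more seriously, in step (b) you concatenate orbit segments into an $\varepsilon$-pseudo-orbit and then ``close'' it with the closing lemma. The closing lemma at hand (definition \ref{def-closing_lemma}, lemma \ref{weak-closing-lemma}) closes a \emph{genuine} orbit segment that returns $\delta$-close to its starting point; it says nothing about pseudo-orbits, and a pseudo-orbit shadowing statement is essentially the specification property, which is precisely what is unavailable here (avoiding it is the point of \cite{CS}). The paper fills this gap as follows: it uses the weak local product structure (lemma \ref{weak-local-product}) near each periodic vector $x_{2i+1}\in\Omega_{hyp}$ to glue the winding pieces and the connecting orbits into a single genuine trajectory, uses item 2 of that lemma to keep this trajectory in $T_{hyp}$, verifies by the argument of lemma \ref{transitivity} that it is nonwandering, hence lies in $\Omega_{hyp}\subset\Omega_{NF}$, and only then applies the closing lemma to this genuine, nearly closed orbit; openness of $\Omega_{NF}$ in $\Omega$ then places the resulting periodic orbit in $\Omega_{NF}$. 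These gluing and nonwandering verifications are the core of the proof and are missing from your proposal. (A minor slip: $\mathcal{M}^1(\Omega_{NF})$ need not be compact since $\Omega_{NF}$ may be noncompact; the density of finite convex combinations of ergodic measures should be quoted from the ergodic decomposition alone, as the paper does.)
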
  

\section{The case of surfaces} \label{Proof-a-la-Cao-Xavier}

In this section, $M$ is a compact, connected, nonpositively curved
orientable surface. We prove theorem \ref{ergodicity-generic-surfaces}. 
 
If the surface admits a periodic flat strip, by our results in \cite{CS2}, 
we know that ergodicity cannot be generic. 
In particular, a periodic orbit in the middle of the flat strip is not 
in the closure of any ergodic invariant probability measure of full support. 

If the surface admits no flat strip, then $\Omega=\Omega_{NF}=T^1M$, 
so that the result follows from theorem \ref{ergodicity-generic}. 
It remains to show the following result.

\begin{prop}\label{nonperiodic-flatstrip} 
Let $M$ be a compact connected orientable nonpositively curved surface. 
If it admits a nonperiodic flat strip, 
then it admits also a periodic flat strip. 
\end{prop}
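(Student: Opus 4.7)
My plan is to exploit the compactness of $M$ to force a nontrivial element of $\pi_1(M)$ to stabilise a given flat strip setwise in the universal cover, and then to use orientability together with the two dimensional structure to conclude that this stabiliser acts by translation, producing a flat cylinder in $M$ and hence a periodic flat strip.

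Starting from a nonperiodic flat strip, I would first enlarge it to a maximal flat strip $F \cong [0,r]\times \R$ of some width $r > 0$ inside $\tilde M$. Since $\tilde M$ is a Hadamard manifold on which $\pi_1(M)$ acts freely and properly discontinuously, $M$ has finite area while $F$ has infinite area. Decomposing the projection $\pi|_F : F \to M$ according to the cardinality of the fibres shows that a positive-measure subset of $\pi(F)$ must have infinite preimage in $F$. After removing the measure-zero union $\bigcup_{\phi \neq e} \phi^{-1}(\partial F)$ from the interior $F^{\circ}$, I would fix an interior point $x_0$ whose $\pi_1(M)$-orbit meets $F^{\circ}$ at infinitely many interior points $\phi(x_0)$ with $\phi \neq e$. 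For every such $\phi$, the interiors of the two flat strips $F$ and $\phi(F)$ intersect.

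The key step, and the main obstacle, is a two dimensional rigidity statement: in a simply connected nonpositively curved surface, two maximal flat strips whose interiors meet must coincide. The idea is that near a common interior point the metric is flat, and an isometric embedding of $[0,r]\times \R$ into the two dimensional $\tilde M$ is determined by its differential at a single point by continuation of local isometries on a simply connected domain. The alternative configurations reduce to this one or are ruled out by maximality: two maximal strips sharing only a boundary geodesic would, if positioned on opposite sides, have a union of width $2r$ which is still flat and contradicts the maximality of $F$, while if positioned on the same side their interiors already overlap. Consequently $\phi(F) = F$, so the setwise stabiliser $\Gamma_F$ of $F$ in $\pi_1(M)$ is nontrivial.

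To finish, I would note that $\pi_1(M)$ is torsion-free, since a finite-order isometry of the CAT(0) space $\tilde M$ would have a fixed point by Cartan's theorem, contradicting the freeness of the deck action. Therefore the element $\phi \in \Gamma_F \setminus \{e\}$ has infinite order, and since $M$ is orientable, $\phi$ acts on $F \cong [0,r]\times \R$ as an orientation-preserving isometry of a Euclidean strip. The orientation-preserving isometries of such a strip are either translations in the $\R$-direction or half-turns about the midline, and the half-turns are involutions, so $\phi$ must be a nontrivial translation $(x,y) \mapsto (x, y+a)$. In particular, $\phi$ preserves each of the two boundary geodesics of $F$ and acts on each by translation, so each boundary geodesic projects to a closed geodesic in $M$ and $F/\langle \phi \rangle$ descends to a flat cylinder in $M$. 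This flat cylinder is precisely a periodic flat strip, completing the proof.
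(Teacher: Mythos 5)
The decisive gap is your two–dimensional rigidity claim that two maximal flat strips with intersecting interiors must coincide. Your justification (``determined by its differential at a single point by continuation of local isometries'') does not work: a totally geodesic flat strip is determined by a full perpendicular cross-section (it is the union of the complete ambient geodesics orthogonal to that segment), not by a germ at one interior point, and your case analysis only treats the parallel configurations (common boundary geodesic, or lateral overlap) while omitting the transverse one, where $F$ and $\phi(F)$ cross each other. In fact the lemma as you state it, for a general simply connected nonpositively curved surface, is false: on $\R^2$ take the conformal metric $e^{2u}(dx^2+dy^2)$ with $u(x,y)=\varphi(x)\varphi(y)$, where $\varphi\ge 0$ is convex, vanishes exactly on $[-1,1]$ and is strictly convex and positive outside. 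Then the curvature $-e^{-2u}\bigl(\varphi''(x)\varphi(y)+\varphi(x)\varphi''(y)\bigr)$ is nonpositive and vanishes exactly on the cross $\{|x|\le 1\}\cup\{|y|\le 1\}$, the lines $x=\pm1$, $y=\pm1$ are geodesics, and the vertical and horizontal strips of the cross are two maximal flat strips whose interiors meet but which do not coincide. So nothing in your argument excludes that every deck transformation moving a point of $F^{\circ}$ back into $F^{\circ}$ sends $F$ to a strip crossing $F$ transversally, in which case you obtain no nontrivial stabiliser and the proof stops. Any repair must use compactness of $M$ in an essential way precisely at this point; this is where the paper's proof takes a genuinely different route, following a strip of nearly maximal width along a recurrent geodesic and producing flat rectangles of width bounded below and length tending to infinity, which accumulate on a flat strip wider than the supremum of widths of nonperiodic strips and which is therefore periodic.

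Two smaller points. The measure-theoretic step is misstated: from $\mathrm{area}(F)=\infty$ and $\mathrm{area}(M)<\infty$ you cannot conclude that a positive-measure set has infinite fibres (the fibre cardinality can be finite everywhere with divergent integral); but what you actually need, a single $\phi\neq e$ with $\phi(F^{\circ})\cap F^{\circ}\neq\emptyset$, does follow, since otherwise $\pi$ would be injective on $F^{\circ}$ and $M$ would have infinite area. Also, the existence of a maximal flat strip containing the given nonperiodic one deserves a short limiting argument (widths are bounded because a compact fundamental domain cannot sit inside an arbitrarily wide strip, and a limit of flat strips of widths tending to the supremum is again a flat strip). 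The endgame — torsion-freeness, orientability forcing $\phi|_F$ to be a translation of the Euclidean strip, and the boundary geodesic projecting to a closed geodesic inside the projected strip — is correct, but it is only reached if the rigidity step can be salvaged.
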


The proof is inspired by unpublished work of Cao and Xavier. 
We proceed by contradiction. Assume that $M$ is not a flat torus, 
but  admits a nonperiodic flat strip. 

There is an isometric embedding from $I\times \R_+$ to the universal cover
$\widetilde{M}$. The interval $I$ is necessarily bounded. Indeed, the manifold
$M$ is compact so it admits a relatively compact connected fundamental domain
for the action of its fundamental group on $\widetilde{M}$. Such a fundamental
domain cannot be completely included in a flat strip with infinite width.

Let $R_{max}\in \R_+$ be the supremum of the widths $r$ 
of nonperiodic  flat strips $[0,r]\times\R_+$
of $\widetilde{M}$. The above argument shows that $R_{max}$ is finite. 

Consider a vector $\tilde{v}\in T^1\widetilde{M}$ generating a trajectory
$(g_t \tilde{v})_{t\ge 0}$ on $T^1\widetilde{M}$ that is tangent to a
nonperiodic flat strip $[0,R]\times \R$, for some $\frac{9}{10} R_{max}\le
R\le R_{max}$. Assume that $R$ is maximal among the width of all flat
strips containing $\tilde{v}$, and relocate $\tilde{v}$ on the boundary
of the strip. Assume also that the trajectory $(g_t \tilde{v})_{t\ge 0}$ 
bounds the right side $\{R\}\times \R_+$ of the flat strip and denote by $v$
the image of $\tilde{v}$ on $T^1M$.

Since $M$ is compact, we can assume that there is a subsequence $g_{t_n} v$,
with $t_n\to +\infty$, such that $g_{t_n}v$ converges to some vector
$v_\infty$. This vector also lies on a flat strip of width at least $R$.
Indeed, consider a lift $\tilde{v}_\infty$ of ${v}_\infty$ and isometries
$\gamma_n$ of $\tilde{M}$ such that $\gamma_n(g_{t_n}\tilde{v})$ converges to
$\tilde{v}_\infty$. Every point on the half-ball of radius $R$ centered on the
base point of $\tilde{v}_\infty$ is accumulated by points on the euclidean
half-balls centered on $\gamma_n(g_{t_n}\tilde{v})$, so the curvature vanishes
on that half-ball. We can talk about the segment in the half-ball starting
from the base point of $\tilde{v}_\infty$ and orthogonal to the trajectory of
$\tilde{v}_\infty$. Vectors based on that segment and parallel to
$\tilde{v}_\infty$ are accumulated by vectors generating geodesics in the flat
strips bounding $\gamma_n(g_{t_n}\tilde{v})$. Hence the curvature vanishes
along the geodesics starting from these vectors and we get a flat strip of
width at least $R$.

If $v_\infty$ lies on a periodic flat strip, the proof is finished. 
Assume  therefore that the flat strip of $v_\infty$ is not periodic. 

The idea  is now  to use the nonperiodic flat strip of $v$ to
construct a flat strip of width strictly larger than $R_{max}$. 
By definition of $R_{max}$, this new flat strip is necessarily periodic, 
and we get the desired result.

The vectors $g_{t_n}v$ converges to $v_\infty$, so consider $t>0$ 
so that the base point of $g_tv$ is very close to the base point of
$v_{\infty}$ and the image of $g_tv$ by the parallel transport from
$T^1_{\pi(g_tv)}M$ to $T^1_{\pi(v)}M$ makes a small angle $\theta$ with $v$.
Observe that this angle $\theta$ is nonzero. Indeed, otherwise, 
the flat strips bounded by $\gamma_{n}(g_{t_n}\tilde{v})$ and $\tilde{v}_\infty$
would be parallel. The flat strip bounded by $\tilde{v}_\infty$ would
extend the flat strip bounded by $\gamma_n(g_{t_n}\tilde{v})$ by a quantity
roughly equal to the distance between their base points, ensuring that the
flat strip bounded by $\tilde{v}$ is actually larger than $R$ and
contradicting the fact that $R$ is the width of this flat strip.
 
\medskip 

Let us now consider a time $t$ such that $g_tv$ is close to $v_\infty$, 
with the angle between these two vectors denoted by $\theta>0$. 
When the flat strip comes back close to $v_\infty$ at time $t$, it cuts
the boundary of the flat strip  along a segment whose length is denoted by $L$.
Without loss of generality, we may assume
that $v_\infty$ lies on the right boundary of its flat strip. 
Let us consider the highest rectangle of length $L/2$ that we can
put at the end of this segment, on its right side, and that belongs
to the returning flat strip of $g_t(v)$ but not to the flat strip 
of $v_\infty$. This rectangle is pictured below, its width is denoted by $H$.

\begin{center}
\epsfig{figure=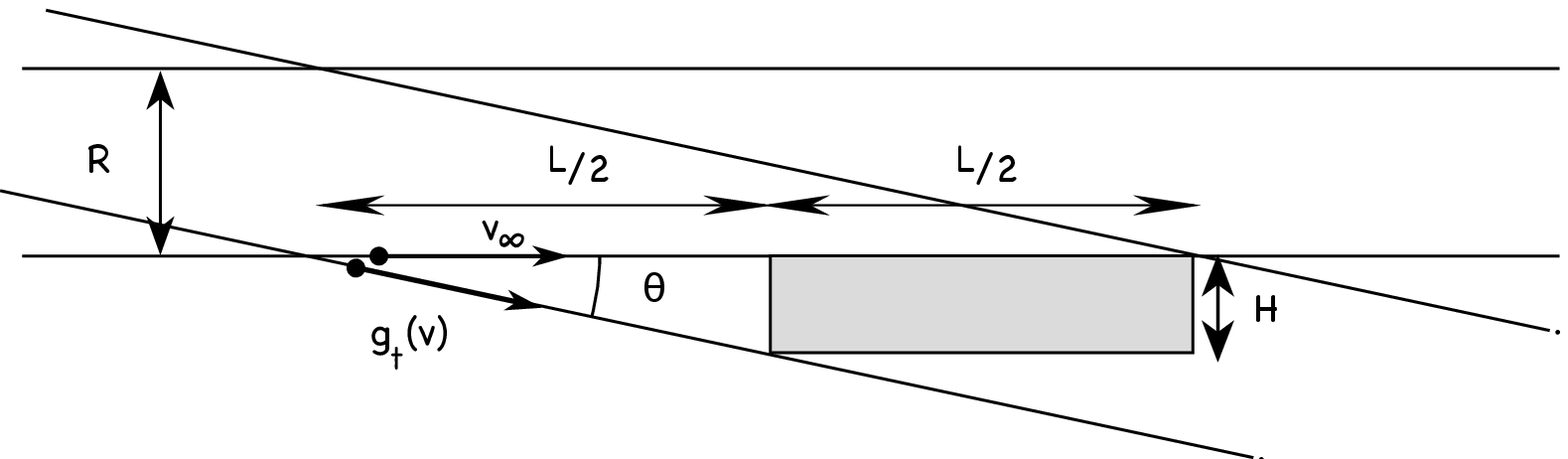,width=.9\textwidth,angle=0}
\end{center}

\noindent
The quantities $H$ and $L$ can be computed 
using elementary euclidean trigonometry.
$$
H =  {R \over 2 \cos \theta} \geq {R\over 2}
$$
$$
L = {R \over \sin \theta} 
\ \oalign{\hbox to 3.1em{\rightarrowfill}\cr
$\scriptstyle\ \theta\rightarrow 0$ \cr}\ +\infty
$$
The same computation works when $v_\infty$ is not on the boundary 
of its flat strip. The rectangle has a width bounded from below and a length 
going to infinity when $\theta$ goes to zero. 

\medskip

Every time the trajectory of $v$ comes back near $v_\infty$,
we get a new rectangle, and this gives
a sequence of rectangles of increasing length right next to the flat strip.
The next picture shows these rectangles in the universal cover $\widetilde{M}$.

\begin{center}
\epsfig{figure=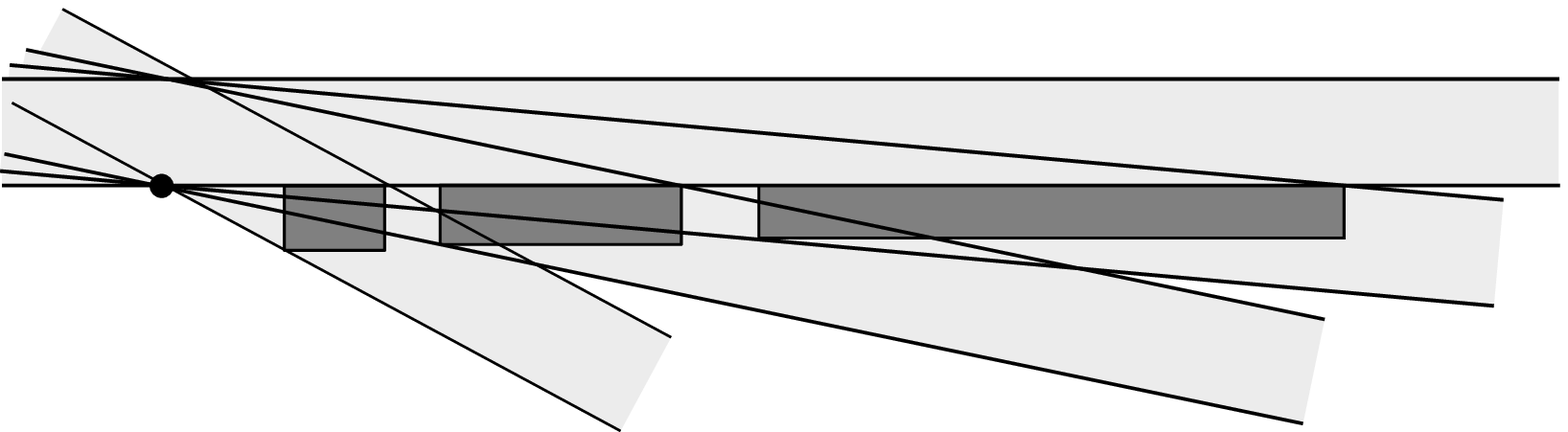,width=.9\textwidth,angle=0}
\end{center}

These rectangles are tangent to the flat strip of width $R$ bounded by
${v}_{\infty}$, so that we get a sequence of flat rectangles of width
$3R/2$ and length $L$ going to infinity. By compactness of $M$, this sequence
accumulates to some infinite flat strip of width $3R/2>R_{max}$. Therefore
this flat strip is periodic, by definition of $R_{max}$. This ends the proof.


\section{The density of Dirac measures in ${\cal M}^1(\Omega_{NF})$ } \label{section4}

This section is devoted to the proof of
proposition \ref{densite-orbites-periodiques} 
and theorem  \ref{ergodicity-generic}.

\subsection{Closing lemma, local product structure and transitivity}

Let $X$ be a metric space, and $(\phi^t)_{t\in\R}$ be a continuous flow acting
on $X$. In this section, we recall three fundamental dynamical properties
that we use in the sequel: the closing lemma, the local product structure, 
and transitivity.

\medskip

When these three properties are satisfied on $X$, 
we proved in \cite{CS}  (prop. 3.2 and corollary 2.3) that 
the conclusion of proposition \ref{densite-orbites-periodiques} 
holds on $X$:   the invariant probability measures
supported by periodic orbits are dense 
in the set of all Borel invariant probability measures on $X$. 

In \cite{Pa1}, Parthasarathy notes that the density of Dirac measures on
periodic orbits is important to understand the dynamical properties of the
invariant probability measures, and he asks under which assumptions it is
satisfied. 
In the next sections, we will prove weakened versions of these three properties 
(closing lemma, local product and transitivity), 
and deduce proposition \ref{densite-orbites-periodiques}.

\begin{defin}\label{def-closing_lemma} A flow $\phi_t$ on a metric space $X$ 
satisfies the {\em closing
 lemma} if for all points $v\in X$, and $\varepsilon>0$, 
there exist  a neighbourhood
 $V$ of $v$ ,   $\delta>0$ and a $t_0>0$
 such that for all $w\in V$ and all $t>t_0$ with $d(w,\phi_t w)<\delta$ and 
 $\phi_t w\in V$,
 there exists $p_0$ and $l>0$, with  $|l-t|<\varepsilon$,
 $\phi_{l}p_0=p_0$, and $d(\phi_s p_0,\phi_s w)<\varepsilon$
 for $0<s<\min(t,l)$.
\end{defin}

\begin{defin}\label{def-local_product_structure} 
The flow ${\phi}_t$ is said to admit a 
{\em local product structure} if all points $u\in X$ have   
a neighbourhood $V$ which satisfies : for all
$\varepsilon>0$, there exists a positive constant $\delta$,
such that for all $v,w\in V$ with  $d(v,w)\leq \delta$, 
there is a point $<v,w>\in X$, 
a real number $t$ with $|t|\leq \varepsilon$, so that: 
$$<v,w> \in {W}^{su}_{\varepsilon}\bigl({\phi}_{t}(v)\bigr)\cap 
{W}^{ss}_{\varepsilon}(w).$$
\end{defin}

\begin{defin}\label{def-transitivity} 
The flow $(\phi^t)_{t\in\R}$ is {\em transitive} if for all non-empty
open sets $U$ and $V$ of $X$, and $T>0$, there is $t\ge T$ such that 
$\phi^t(U)\cap V\neq \emptyset$. 
 \end{defin}

Recall that if $X$ is a $G_\delta$ subset of a
complete separable metric space, then it is a Polish space, and the set
$\mathcal{M}^1(X)$ of invariant probability measures on $X$ is also a Polish
space. As a result, the Baire theorem holds on this space \cite{bi}
th 6.8. In particular, this will be the case for the set $X=\Omega_{NF}$ 
when it is open in $\Omega$, since $\Omega$ is a closed subset of $T^1M$.

\medskip

If $M$ is negatively curved, we saw in \cite{CS} that the restriction 
of $(g_t)_{t\in\R}$ to $\Omega$ 
satisfies the closing lemma, the local product structure, and is transitive. 
Note that we do not need any (lower or upper) bound  on the curvature, i.e. 
we allow the curvature to go to $0$ or to $-\infty$
in some noncompact parts of $M$. In particular, the conclusions of all 
theorems of this article   apply to  the geodesic flow on 
the nonwandering set of any nonelementary negatively curved manifold. 


\subsection{Closing lemma and transitivity on $\Omega_{NF}$} 

We start by a proposition essentially due to G. Knieper 
(\cite{Knieper} prop 4.1).

\begin{prop}\label{recurrents-sont-bons}
Let $v\in \Omega_{NF}$ be a recurrent 
vector which does not bound a flat strip. Then $v\in \Omega_{hyp}$, i.e. 
its strong stable (resp. unstable) manifold coincides 
with its stable (resp. unstable) horosphere. 
\end{prop}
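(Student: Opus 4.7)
The plan is to combine the convexity of the distance function between forward-asymptotic geodesics in nonpositive curvature with the flat strip theorem, using the recurrence of $v$ as the mechanism that promotes a merely bounded distance into a distance that stays constant for all time. Given a lift $\tilde v \in T^1\tilde M$ of $v$ and any $\tilde w \in T^1\tilde M$ with $d(g^t\tilde v, g^t\tilde w) \le C$ for all $t \ge 0$, the two geodesics share an endpoint $\xi \in \partial \tilde M$. Using the Busemann function $b_\xi$ at $\xi$, I would pick $\tau \in \R$ with $b_\xi(\tilde w(0)) = b_\xi(\tilde v(\tau))$, so that the base points of $\tilde v' := g^\tau \tilde v$ and $\tilde w$ lie on a common horosphere. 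The function $f(t) = d(g^t\tilde v', g^t\tilde w)$ is then convex (nonpositive curvature), bounded by $|\tau|+C$, hence nonincreasing on $\R_+$ with a finite limit $d_\infty \ge 0$.

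Next I would use recurrence to promote $f$ into a constant on all of $\R$. Since $v$, and therefore $g^\tau v$, is recurrent, I get times $t_n \to +\infty$ and deck isometries $\gamma_n$ of $\tilde M$ with $\gamma_n g^{t_n}\tilde v' \to \tilde v'$ in $T^1\tilde M$. The points $\gamma_n g^{t_n}\tilde w$ remain at distance at most $|\tau|+C$ from $\gamma_n g^{t_n}\tilde v'$, so extracting a subsequence I obtain a limit $\tilde w_\infty \in T^1\tilde M$. Isometric invariance of the distance gives, for every $t \in \R$ and $n$ large enough that $t+t_n \ge 0$,
\[
d(g^t \gamma_n g^{t_n}\tilde v', g^t \gamma_n g^{t_n}\tilde w) = f(t+t_n) \to d_\infty,
\]
so passing to the limit yields $d(g^t\tilde v', g^t\tilde w_\infty) = d_\infty$ for every $t \in \R$: the complete geodesics of $\tilde v'$ and $\tilde w_\infty$ remain at constant distance $d_\infty$ throughout time.

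The flat strip theorem now concludes the argument. If $d_\infty > 0$, the geodesics of $\tilde v'$ and $\tilde w_\infty$ bound a flat euclidean strip of positive width $d_\infty$; since $\tilde v'$ generates the same geodesic as $\tilde v$, this would contradict the hypothesis $v \in \Omega_{NF}$. Therefore $d_\infty = 0$, which is exactly the assertion $\tilde w \in W^{ss}(g^\tau \tilde v)$, i.e.\ $v \in T^+_{hyp}$. The unstable inclusion $v \in T^-_{hyp}$ follows by the symmetric argument applied to the vector $-v$ and the backward flow, the (two-sided) recurrence of $v$ providing the analogous sequence $\gamma_n g^{-t_n}\tilde v \to \tilde v$. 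Combining the two gives $v \in \Omega \cap T_{hyp} = \Omega_{hyp}$.

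The main obstacle is the phase alignment built into the first step: without inserting the shift $g^\tau$ from the Busemann cocycle, the limit $d_\infty$ need not vanish even in strictly negative curvature, as one sees by taking two non-synchronized geodesic rays in $\mathbb{H}^2$ sharing an endpoint at infinity. In that case a naive application of the flat strip theorem to the limit configuration would produce a spurious contradiction. Choosing $\tau$ so that $\tilde v'$ and $\tilde w$ start on a common horosphere is exactly what makes the limit configuration a faithful flat strip detector, and is the step where the geometry of nonpositive curvature (Busemann functions, horospheres, convex distance) is really being used.
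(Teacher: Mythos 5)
Your proof is correct and follows essentially the same route as the paper's: use recurrence to pull $g^{t_n}\tilde w$ back by deck transformations, extract a limit vector, and apply the flat strip theorem to the resulting pair of bi-asymptotic geodesics, contradicting $v\in\Omega_{NF}$. The differences are cosmetic --- you argue directly, with a Busemann-normalized time shift and a constant limit distance, where the paper argues by contradiction with a positive lower bound $c>0$ --- and, like the paper, you leave implicit the degenerate case where the limit vector $\tilde w_\infty$ lies on the flow orbit of $\tilde v'$ (there the flat strip theorem gives nothing, but then a shifted $\tau$ works by convexity of the distance, or your horosphere normalization, carried through the limit, rules the case out).
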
  

\begin{proof} Let $\widetilde{M}$ the universal cover of $M$ and
 $\tilde{v}\in T^1\widetilde{M}$ be a lift of $v$. 
Assume that there exists $w\in T^1M$ which belongs to the stable horosphere, 
but not to the strong stable manifold of $v$.  
We can therefore find $c>0$,
 such that $0<c\le d(g^t\tilde{v},g^t\tilde{w})\le d(v,w)$, for all $t\ge 0$. 
Let us denote by $\Gamma$ the deck transformation group of the 
covering $\tilde{M}\rightarrow M$.
This group acts by isometries on $T^1\tilde{M}$.
The vector $v$ is recurrent, so there exists $\gamma_n\in \Gamma$, 
$t_n\to\infty$, with $\gamma_n(g^{t_n}\tilde{v})\to\tilde{v}$. 
Therefore, for all $s\ge -t_n$, 
we have $c\le d(g^{t_n+s}\tilde{v},g^{t_n+s}\tilde{w})=
d(g^s\gamma_ng^{t_n}v,g^s\gamma_n g^{t_n}w)\le d(v,w)$.
Up to a subsequence, we can assume that $\gamma_n g^{t_n}w$ 
converges to a vector $z$. 
Then we have for all $s\in \R$, $0<c\le d(g^s \tilde{v}, g^s z)\le d(v,w)$. 
The flat strip theorem shows that $\tilde{v}$ bounds a flat strip
(see e.g. \cite{ballmann} cor 5.8).
This concludes the proof. 
\end{proof}


In order to state the next result, we recall a definition.
The ideal boundary of the universal cover, denoted by $\partial\tilde{M}$,
is the set of equivalent classes of half geodesics that stay at a bounded
distance of each other, for all positive $t$. We note $u_+$ the class
associated to the geodesic $t\mapsto u(t)$, and $u_-$ the class
associated to the geodesic $t\mapsto u(-t)$. 

\begin{lem}[Weak local product structure]\label{weak-local-product} 
Let $M$ be a complete, connected, nonpositively curved manifold, 
and $v_0$ be a vector that does not bound a flat strip. 
\begin{enumerate}
\item  For all $\varepsilon>0$, there exists 
$\delta>0$, such that if $v,w\in T^1M$ satisfy $d(v,v_0)\le\delta$, 
$d(w,v_0)\le \delta$, there exists a vector $u=<v,w>$ 
satisfying $u^-=v^-$, $u^+=v^+$, and $d(u,v_0)\le \varepsilon$. 
\item Moreover, if $v,w\in T_{hyp}$,
then  $u=<v,w>\in T_{hyp}$.  
\end{enumerate}
\end{lem}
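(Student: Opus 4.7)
I would work in the universal cover $\widetilde M$, which is a Hadamard manifold. Fix a lift $\tilde v_0$ of $v_0$, and take lifts $\tilde v,\tilde w$ of $v,w$ with $d(\tilde v_0,\tilde v),\, d(\tilde v_0,\tilde w)\le \delta$. I interpret item~1 with $u^+=w^+$ in place of the apparent typo $u^+=v^+$, matching the strong-unstable/strong-stable construction in definition~\ref{def-local_product_structure}. The goal is then to produce a bi-infinite geodesic in $\widetilde M$ with backward endpoint $\tilde v^-$ and forward endpoint $\tilde w^+$ whose basepoint lies within $\varepsilon$ of $\pi(\tilde v_0)$; the projection to $T^1M$ of its unit tangent at that basepoint will be the desired $u=\langle v,w\rangle$.

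To construct the geodesic, I would take sequences $p_n\to \tilde v^-$ along the trajectory of $\tilde v$ and $q_n\to \tilde w^+$ along that of $\tilde w$, consider the geodesic segments $\sigma_n=[p_n,q_n]$, pick $x_n\in\sigma_n$ closest to $\pi(\tilde v_0)$, and extract by Arzel\`a--Ascoli a subsequential limit after reparametrizing so that $\sigma_n(0)=x_n$. Provided the $x_n$ stay in a compact set, the limit is a bi-infinite geodesic with endpoints $\tilde v^-$ and $\tilde w^+$ passing through the closed $\varepsilon$-ball around $\pi(\tilde v_0)$, yielding $u$.

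The main obstacle is the a priori bound that the $x_n$ remain in the $\varepsilon$-ball for $\delta$ small enough. I would prove this by contradiction, via the flat strip theorem (\cite{ballmann}, cor.~5.8). If it failed, one would find $\delta_k\to 0$, $\tilde v_k,\tilde w_k$ at distance $\le \delta_k$ from $\tilde v_0$, and connecting segments $\sigma_k$ remaining at distance $\ge\varepsilon$ from $\pi(\tilde v_0)$. Since $\tilde v_k^\pm, \tilde w_k^\pm\to \tilde v_0^\pm$ in the cone topology on $\partial\widetilde M$, convexity of the distance function in CAT(0) together with the convergence of the endpoints keeps the nearest points $x_k$ bounded, and a diagonal extraction produces a bi-infinite geodesic $\gamma_\infty$ with endpoints $\tilde v_0^\pm$ whose basepoint is at distance $\ge \varepsilon$ from $\pi(\tilde v_0)$---in particular, distinct from the trajectory of $\tilde v_0$. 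The flat strip theorem then forces $\tilde v_0$ to bound a flat strip, contradicting the hypothesis on $v_0$.

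For item~2, assume $v,w\in T_{hyp}$ and take $\tilde u'\in T^1\widetilde M$ with $\sup_{t\ge 0}d(g^t\tilde u,g^t\tilde u')<\infty$. Then $\tilde u'^+=\tilde u^+=\tilde w^+$, so $\tilde u'$ lies on the stable horosphere of $\tilde w$. Since $w\in T_{hyp}^+$, that horosphere coincides with the strong stable manifold of $\tilde w$ up to time shift, so $\tilde u'$ is strongly stably asymptotic to $\tilde w$. The same reasoning applied to $\tilde u$ itself, whose forward endpoint is also $\tilde w^+$, puts $\tilde u$ on the strong stable manifold of $\tilde w$ up to time shift; thus $\tilde u$ and $\tilde u'$ are mutually strongly stably asymptotic, giving $u\in T_{hyp}^+$. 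The statement $u\in T_{hyp}^-$ follows symmetrically from $u^-=v^-$ and $v\in T_{hyp}^-$.
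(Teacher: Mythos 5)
Your reading of the statement (that $u^+=v^+$ is a typo for $u^+=w^+$) is right, and your argument for item~2 is correct and matches the paper's one-line justification from the definition of $T_{hyp}$. The problem is item~1. The paper does not reprove it: it observes that the statement is exactly Lemma~3.1, p.~50 of \cite{ballmann}, and that lemma is precisely the nontrivial point your proof glosses over. Concretely, the gap is the a priori bound you yourself identify as ``the main obstacle'': you claim that convexity of the distance function in CAT(0), together with convergence of the endpoints $p_k\to \tilde v_0^-$, $q_k\to\tilde v_0^+$ in the cone topology, keeps the nearest points $x_k$ of the segments $\sigma_k=[p_k,q_k]$ in a bounded set. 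That implication is false as a general principle. In the Euclidean plane, take $p_k=(-k,\sqrt k)$, $q_k=(k,\sqrt k)$: the endpoints converge in the cone topology to the two ends of the horizontal geodesic through the origin, yet the segments $[p_k,q_k]$ escape to infinity. Convexity of $d(\cdot,o)$ or of $d(\cdot,\gamma_{\tilde v_0})$ along $\sigma_k$ only says these functions attain their maxima at the endpoints, which are far away, so it yields no upper bound at interior points; and the comparison-angle estimate at $o$ gives $d(o,\sigma_k)\lesssim(\pi-\bar\theta_k)\min(d(o,p_k),d(o,q_k))$, where the angle defect has no rate, so the product need not stay bounded. Even restricting the endpoints to lie on rays of vectors $\delta_k$-close to $\tilde v_0$ does not save the soft argument: the flat strip case shows the boundedness genuinely requires the hypothesis that $v_0$ does not bound a flat strip, and in your write-up that hypothesis is only invoked \emph{after} a limit geodesic has been extracted --- but if the segments escape, there is no limit geodesic and no contradiction with the flat strip theorem ever arises.

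So the structure of your proof is circular in effect: the compactness step you assert is essentially equivalent to Ballmann's Lemma~3.1 (the existence, for ideal endpoints close to $v_0^{\mp}$, of a connecting geodesic passing within $\varepsilon$ of $\pi(v_0)$), which is what you set out to prove and what the paper simply cites. To repair the argument you would need a proof in which the no-flat-strip hypothesis enters the derivation of the bound on $x_k$ itself (this is how Ballmann's proof proceeds), or you should follow the paper and quote \cite{ballmann}, Lemma~3.1, for item~1, keeping your (correct) argument for item~2.
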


\noindent
This lemma will be applied later 
to recurrent vectors that do not bound a flat strip;
these are all in $\Omega_{hyp}$. 

\begin{proof}
The first item of this lemma is an immediate reformulation
of \cite{ballmann} lemma 3.1 page 50. 
The second item comes from the definition of the set $T_{hyp}$
of vectors whose stable (resp. unstable) 
manifold coincide with the stable (resp. unstable) horosphere. 
\end{proof}

Note that \emph{a priori}, the local product structure as stated in definition 
\ref{def-local_product_structure} and in \cite{CS} 
is not satisfied on $\Omega_{NF}$: if $v,w$ are in $\Omega_{NF}$, 
the local product $<v,w>$ 
does not necessarily belong to $\Omega_{NF}$.


\begin{lem}\label{weak-closing-lemma}
Let $M$ be a nonpositively curved manifold such that 
$\Omega_{NF}$ is open in $\Omega$. 
Then the closing lemma (see definition \ref{def-closing_lemma})
is satisfied in restriction to $\Omega_{NF}$. 
\end{lem}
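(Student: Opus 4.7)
The plan is to adapt the negative-curvature closing lemma proved in \cite{CS} for $\Omega$, and then to use the openness hypothesis to upgrade the resulting periodic orbit to one inside $\Omega_{NF}$. Two facts will be used repeatedly: any $v\in\Omega_{NF}$ is a rank one vector (otherwise a nontrivial orthogonal parallel Jacobi field would, by the Flat Strip Theorem, put $v$ on a flat strip), and the distance function on $\tilde M$ is convex, with strict convexity transverse to the orbit of any rank one vector \cite{ballmann}.

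Given $v\in\Omega_{NF}$ and $\varepsilon>0$, I would first use the openness of $\Omega_{NF}$ in $\Omega$ to pick $\eta\in(0,\varepsilon)$ such that every $u\in\Omega$ with $d(u,v)<\eta$ lies in $\Omega_{NF}$. Then I would shrink to a neighbourhood $V$ of $v$, and fix constants $\delta>0$ and $t_0>0$ to be determined. For $w\in V$ and $t>t_0$ with $g^tw\in V$ and $d(w,g^tw)<\delta$, I lift $w$ to $\tilde w\in T^1\tilde M$ and pick the deck transformation $\gamma\in\Gamma$ associated with this near return, so that $d(\gamma\tilde w,g^t\tilde w)$ is small. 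The objective is then to show that $\gamma$ is axial, that it admits an axis $c_0$ passing within $\varepsilon$ of $\tilde w$, and that the induced closed geodesic has length $l$ with $|l-t|<\varepsilon$. Because $v$ is rank one, the displacement function $x\mapsto d(x,\gamma x)$ is strictly convex in the directions transverse to the orbit of $\tilde v$, so for $\delta$ small enough its minimum set meets a neighbourhood of $\tilde v$ in a single geodesic line, which is the sought axis $c_0$. Convexity of the distance then propagates closeness from the endpoints to every intermediate time, yielding $d(g^sp_0,g^sw)<\varepsilon$ for $s\in[0,\min(t,l)]$, where $p_0$ is the projection of $c_0$ to $T^1M$.

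Since $p_0$ is periodic, $p_0\in\Omega$; by further shrinking $V$ and $\delta$ so that $d(p_0,v)<\eta$, the openness hypothesis forces $p_0\in\Omega_{NF}$, which is exactly the statement of the closing lemma in restriction to $\Omega_{NF}$. The main difficulty lies in the axis step: in nonpositive curvature the minimum set of an axial isometry can be a positive-width flat strip, so the axis is not a priori unique, and worse, the associated closed geodesic on $M$ could bound a flat strip. The rank one condition on $v$ is what rules out the first failure locally, via strict transverse convexity of the displacement; and the openness of $\Omega_{NF}$ in $\Omega$ is what rules out the second at the end, by turning the $\eta$-closeness of $p_0$ to $v$ into the property of not bounding a flat strip. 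These are precisely the two structural hypotheses in the statement.
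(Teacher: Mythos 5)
There is a genuine gap, and it starts with your very first ``fact'': a vector that does not bound a flat strip need \emph{not} be rank one. A nontrivial orthogonal parallel Jacobi field along the geodesic of $v$ only forces the sectional curvature of the planes it spans with $\dot\gamma$ to vanish along that geodesic; it does not integrate to a totally geodesic isometric copy of $[0,r]\times\R$, so the Flat Strip Theorem does not apply. The paper itself points this out (a higher rank periodic vector can lie in $\Omega_{hyp}\subset\Omega_{NF}$ without bounding a flat strip of positive width), so $\Omega_{NF}$ is in general strictly larger than the set of rank one vectors. Consequently the whole mechanism you build on --- ``strict convexity of the displacement function transverse to the orbit of $\tilde v$ because $v$ is rank one'' --- has no foundation for a general $v\in\Omega_{NF}$.

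Even granting rank one, the crux of the closing lemma in this noncompact, nonpositively curved setting is precisely the step you assert rather than prove: that the near-return deck transformation $\gamma$ is \emph{axial} and admits an axis passing close to $\tilde w$. Convexity (even strict convexity near the orbit of $\tilde v$) of $x\mapsto d(x,\gamma x)$ does not give you that its infimum is attained, nor that the minimum set, if nonempty, comes anywhere near $\tilde v$; on a noncompact quotient $\gamma$ could a priori fail to be semisimple, and note that the displacement of $\gamma$ at the basepoint of $\tilde w$ is of order $t$, so no smallness argument localizes the would-be axis. Moreover, strict transverse convexity of the displacement function is a property one derives \emph{along an axis of a rank one axial isometry}, i.e.\ after axiality is known --- your argument uses it to produce the axis, which is circular. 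The paper's proof supplies exactly the missing ingredient by working at the boundary at infinity: since $\tilde u$ does not bound a flat strip, Ballmann's Lemma~3.1 yields, for any $\xi^-$ near $u^-$ and $\xi^+$ near $u^+$, a connecting geodesic passing within $\varepsilon/2$ of the basepoint of $\tilde u$; combined with $\varphi_n(x)\to u^-$, $\varphi_n^{-1}(x)\to u^+$ and a fixed point argument, this shows $\varphi_n$ fixes two boundary points near $u^{\pm}$, hence is axial, with an axis close to $\tilde u$, which produces the shadowing periodic orbit and the contradiction. Your final step --- using openness of $\Omega_{NF}$ in $\Omega$ to conclude that the periodic orbit lies in $\Omega_{NF}$ --- does agree with the paper, but without a correct argument for axiality and for the location of the axis, the proof does not go through.
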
 

\begin{proof}
We  adapt the  argument of  Eberlein \cite{eb1} 
(see also  the proof of theorem 7.1 in \cite{CS}). 
Let $u\in \Omega_{NF}$, $\varepsilon>0$ and $U$ be a neighborhood
of $u$ in $\Omega$. We can assume that
$U\subset\Omega_{NF}\subset\Omega$
since $\Omega_{NF}$ is open in $\Omega$.
Given $v\in U\cap \Omega_{NF}$, with $d(g^tv,v)$ very small 
for some large $t$, it is enough to find a periodic orbit $p_0\in U$ 
shadowing the orbit of $v$ during a time $t\pm \varepsilon$. 
Since the sets $\Omega_{hyp}$ and $\Omega_{NF}$ 
have the same periodic orbits, we will deduce that 
$p_0\in\Omega_{hyp}\subset\Omega_{NF}$. 

Choose $\varepsilon>0$, and assume by contradiction 
that there exists a sequence $(v_n)$ in $\Omega_{NF}$, $v_n\to u$, 
and $t_n\to +\infty$, such that 
$d(v_n,g^{t_n}v_n)\to 0$, with no periodic orbit of length 
approximatively $t_n$ shadowing the orbit of $v_n$. 

Lift everything to $T^1\tilde{M}$. There exists $\varepsilon>0$, 
$\tilde{v}_n\to \tilde{u}$, $t_n\to +\infty$, 
and a sequence of isometries $\varphi_n$ of $\widetilde{M}$ s.t. 
$d(\tilde{v}_n, d\varphi_n\circ g^{t_n}\tilde{v}_n)\to 0$. 
Now, we will show that for $n$ large enough, $\varphi_n$ is an 
axial isometry, and find on  its axis a 
vector $\tilde{p}_n$ which is the lift of a periodic orbit of 
length $\omega_n=t_n\pm\varepsilon$ shadowing the orbit of $v_n$. 
This will conclude the proof by contradiction. 

Let $\gamma_{\tilde{u}}$ be the geodesic determined by 
$\tilde{u}$, and $u^\pm$ its endpoints at infinity, 
$x\in\tilde{M}$ (resp. $x_n$, $y_n$) the 
basepoint of $\tilde{u}$ (resp. $\tilde{v}_n$, $g^{t_n}\tilde{v}_n$). 
As $\tilde{v}_n\to \tilde{u}$, $t_n\to +\infty$, 
$x_n\to x$, and $d(\varphi_n^{-1}(x_n),y_n)\to 0$,
we see easily that $\varphi_n^{-1}(x)\to u^+$. Similary, $\varphi_n(x)\to u^-$. 

Since $\tilde{u}$ does not bound a flat strip, Lemma 3.1 
of \cite{ballmann} implies that for all $\alpha>0$, 
there exist neighbourhoods $V_\alpha(u^-)$ and $V_\alpha(u^+)$ 
of $u^-$ and $u^+$ respectively, in the boundary at 
infinity of $\tilde{M}$, such that for all $\xi^-\in V_\alpha(u^-)$ 
and $\xi^+ \in V_\alpha(u^+)$, 
there exists a geodesic joining $\xi^-$ and $\xi^+$ and 
at distance less than $\alpha$ from $x=\gamma_{\tilde{u}}(0)$. 

Choose $\alpha=\varepsilon/2$.
We have $\varphi_n(x)\to u^-$ and $\varphi_n^{-1}(x)\to u^+$, so for
$n$ large enough, 
$\varphi_n(V_{\varepsilon/2}(u^-))\subset V_{\varepsilon/2}(u^-)$ 
and $\varphi_n^{-1}(V_{\varepsilon/2}(u^+))\subset V_{\varepsilon/2}(u^+)$. 
By a fixed point argument, we find two fixed points 
$\xi_n^\pm\in V_{\varepsilon/2}(u^\pm)$ of $\varphi_n$, 
so that $\varphi_n$ is an axial isometry. 

Consider the geodesic joining $\xi_n^-$ to $\xi_n^+$ given 
by W. Ballmann's lemma. 
It is invariant by $\varphi_n$, which acts by translation on it, 
so that it induces on $M$ a periodic geodesic, and on 
$T^1M$ a periodic orbit of the geodesic flow. 
Let $p_n$ be the vector of this orbit minimizing the distance
 to $u$, and $\omega_n$ its period. 
The vector $p_n$ is therefore close to $v_n$, and its period close to $t_n$, 
because $d\varphi_n^{-1}(\tilde{p}_n)=g^{\omega_n}\tilde{p}_n$ 
projects on $T^1M$ to $p_n$, 
 $d\varphi_n^{-1}(\tilde{v}_n)=g^{t_n}\tilde{v}_n$ projects to
 $g^{t_n}v_n$, $d(g^{t_n}v_n,v_n)$ is small,
and $\varphi_n$ is an isometry.  
Thus, we get the desired contradiction.
\end{proof}

\begin{lem}[Transitivity]\label{transitivity} 
Let $M$ be a connected, complete, nonpositively curved manifold
which contains at least three distinct periodic orbits
that do not bound a flat strip.
If $\Omega_{NF}$ is open in $\Omega$, 
then the restriction of the  geodesic flow to any of the two sets
$\Omega_{NF}$ or $\Omega_{hyp}$ is transitive. 
\end{lem}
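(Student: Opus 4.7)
The plan is to prove transitivity on $\Omega_{NF}$; the $\Omega_{hyp}$ case will follow because $\Omega_{hyp}\subset\Omega_{NF}$ and every periodic orbit of the geodesic flow in $\Omega_{NF}$ lies in $\Omega_{hyp}$ (as noted in the proof of Lemma \ref{weak-closing-lemma}). Given nonempty open sets $U,V\subset\Omega_{NF}$ and $T>0$, one seeks $t\ge T$ with $g^t(U)\cap V\ne\emptyset$.

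The first step is to establish density of periodic orbits in $\Omega_{NF}$. For any $v\in\Omega_{NF}$, the nonwandering property combined with openness of $\Omega_{NF}$ in $\Omega$ yields sequences $v_n\to v$ in $\Omega_{NF}$ and $t_n\to+\infty$ with $d(v_n,g^{t_n}v_n)\to 0$; the weak closing lemma (Lemma \ref{weak-closing-lemma}) then produces periodic orbits in any neighborhood of $v$, which belong to $\Omega_{NF}$ by openness. So one can pick periodic orbits $p\in U$ and $q\in V$ lying in $\Omega_{NF}$, with lifts $\tilde p,\tilde q\in T^1\tilde M$, boundary endpoints $p^\pm,q^\pm\in\partial\tilde M$, and axial isometries $\alpha_p,\alpha_q\in\Gamma$. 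The hypothesis that $\Omega_{NF}$ contains three distinct periodic orbits lets one arrange, after possibly perturbing $p,q$ within the dense set of periodic orbits and possibly replacing them by products of axial isometries with the third orbit, that the four points $p^\pm,q^\pm$ are pairwise distinct.

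Next I would construct heteroclinic geodesics. Since $\tilde p$ does not bound a flat strip, $\alpha_p$ acts on $\partial\tilde M$ with $p^+$ as an attracting fixed point, so $\alpha_p^n q^+\to p^+$ as $n\to\infty$. Ballmann's Lemma 3.1 (recalled in the proof of Lemma \ref{weak-closing-lemma}) then yields, for $n$ large, a geodesic $\tilde c_1$ from $p^-$ to $\alpha_p^n q^+$ passing arbitrarily close to $\tilde p$; its projection $c_1$ in $T^1M$ has past orbit accumulating on the orbit of $p$ and future orbit accumulating on the orbit of $q$. A symmetric argument produces a reverse heteroclinic $c_2$ from $q$ to $p$.

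The final step, and the main obstacle, is to promote these wandering heteroclinics into a genuine orbit in $\Omega_{NF}$ visiting $U$ and $V$. Following the strategy of \cite{CS}, I would concatenate long pieces of $p$, $c_1$, $q$, and $c_2$ into a pseudo-orbit in $T^1\tilde M$ which closes up in $T^1M$ after a suitable deck transformation, using the weak local product structure (Lemma \ref{weak-local-product}) at each junction to glue the pieces. For well-chosen waiting times along $p$ and $q$ this produces a vector $w$ satisfying $d(w,g^Tw)<\delta$ for some arbitrarily large $T$; by proximity to the orbits of $p,q\in\Omega_{NF}$ and openness of $\Omega_{NF}$ in $\Omega$, one has $w\in\Omega_{NF}$. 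The weak closing lemma (Lemma \ref{weak-closing-lemma}) then yields a periodic orbit $p_0\in\Omega_{NF}$ shadowing the pseudo-orbit, hence visiting both $U$ and $V$, which gives the required return. The delicate points are the bookkeeping of lifts and deck transformations needed to make the loop close up in $T^1M$, and the need to keep the concatenated pseudo-orbit inside $\Omega_{NF}$, since the local product structure available here does not a priori preserve this set.
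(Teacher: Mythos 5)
Your first two steps (density of periodic orbits in $\Omega_{NF}$ via the closing lemma plus openness, and the construction of heteroclinic orbits between the two periodic orbits via boundary dynamics) are sound; the second essentially re-derives Ballmann's lemma 3.3, which the paper cites directly, and the degenerate case is handled with the third periodic orbit as in the paper. The genuine gap is in your final step. To apply Lemma \ref{weak-closing-lemma} to the glued vector $w$ you need $w\in\Omega_{NF}$, and your justification --- ``by proximity to the orbits of $p,q\in\Omega_{NF}$ and openness of $\Omega_{NF}$ in $\Omega$'' --- does not establish this. Openness of $\Omega_{NF}$ in $\Omega$ only upgrades a vector that is already known to be nonwandering; being close to nonwandering (even periodic) orbits does not make $w$ nonwandering, and a concatenation of heteroclinic pieces is a priori a wandering orbit. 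So you have shown neither $w\in\Omega$ nor $w\in\Omega_{NF}$, and the application of the closing lemma is not licensed as stated. This is precisely the ``delicate point'' you flag at the end, and it is left unresolved.

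The paper's proof sidesteps the closing-up entirely and is lighter: it picks periodic vectors $v_1\in U\cap\Omega_{hyp}$, $v_2\in V\cap\Omega_{hyp}$, takes a single connecting orbit $v_3$ negatively asymptotic to $v_1$ and positively asymptotic to $v_2$ (Ballmann, lemma 3.3), notes that $v_3\in T_{hyp}$ because $v_1,v_2\in\Omega_{hyp}$, and then proves that $v_3$ is \emph{nonwandering} by gluing the two heteroclinic connections (from $v_1$ to $v_2$ and from $v_2$ back to $v_1$) with the weak local product structure (Lemma \ref{weak-local-product}), producing an orbit that starts near $v_3$ and returns near $v_3$. Hence $v_3\in\Omega\cap T_{hyp}=\Omega_{hyp}\subset\Omega_{NF}$, and its orbit goes from $U$ to $V$ after an arbitrarily long wait near $v_1$, which gives transitivity on both $\Omega_{NF}$ and $\Omega_{hyp}$ at once. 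To repair your argument you would need the same device: show the glued vector lies in $T_{hyp}$ (Lemma \ref{weak-local-product}(2)) and is nonwandering via a second gluing --- this is exactly what the paper does later in the proof of Proposition \ref{densite-orbites-periodiques} --- or else observe that the proof of Lemma \ref{weak-closing-lemma} only uses that the reference vector does not bound a flat strip, not that the returning vector lies in $\Omega_{NF}$, so it applies to your $w$; but some such argument is required, and openness alone does not supply it.
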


Transitivity of the geodesic flow on $\Omega$
was already known under the so-called duality condition, 
which is equivalent to the equality $\Omega=T^1M$ 
(see \cite{ballmann} for details and references). 
In that case, $\Omega_{hyp}$ is dense in $T^1M$.

\begin{proof} 
Let $U_1$ and $U_2$ be two open sets in $\Omega_{NF}$.
Let us show that there is a trajectory in $\Omega_{NF}$ that starts from
$U_1$ and ends in $U_2$. This will prove transitivity on $\Omega_{NF}$.

The closing lemma implies that periodic orbits 
in $\Omega_{hyp}$ are dense in $\Omega_{NF}$ and $\Omega_{hyp}$. 
So we can find two periodic vectors 
$v_1$ in $\Omega_{hyp}\cap U_1$, and  $v_2$ in $\Omega_{hyp}\cap U_2$. 
Let us assume that $v_2$ is not
opposite to $v_1$ or an iterate of $v_1$:
$-v_2\not\in \cup_{t\in {\bf R}} g_t(\{v_1\})$. 
Then there is a vector $v_3\in T^1M$
whose trajectory is negatively asymptotic to the trajectory of 
$v_1$ and positively asymptotic to
the trajectory of $v_2$, cf \cite{ballmann} lemma 3.3. 
Since $v_1$ and $v_2$ are in $\Omega_{hyp}$, 
the vector $v_3$ also belongs to $T_{hyp}$, 
and therefore does not bound a periodic flat strip. 

\medskip

Let us show that $v_3$ is nonwandering.
First note that there is also a trajectory negatively asymptotic to 
the negative trajectory of $v_2$ and positively asymptotic to
the trajectory of $v_1$. That is, the two periodic orbits $v_1$, $v_2$
are connected as pictured below.


\begin{center}
\epsfig{figure=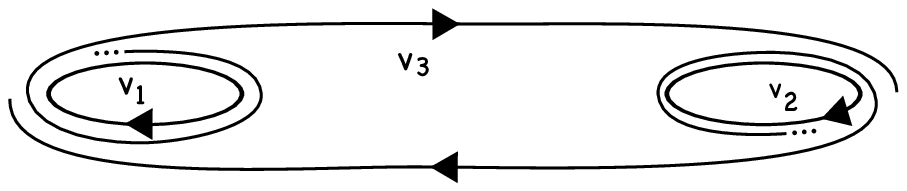,width=0.4\textwidth,angle=0}
\end{center}

This implies that the two connecting orbits are nonwandering:
indeed, using the local product structure, we can glue the two
connecting orbits to obtain a trajectory that starts close to $v_3$,
follows the second connecting orbit, and then follows the orbit of
$v_3$, coming back to the vector $v_3$ itself. Hence $v_3$ is in
$\Omega$. Since it is in $T_{hyp}$ 
it belongs to $\Omega_{hyp}\subset\Omega_{NF}$
and we are done.

\medskip

If $v_1$ and $v_2$ generate opposite trajectories, then we take a third
periodic vector $w$ that does not bound a flat strip,
and connect first $v_1$ to $w$ then $w$ to $v_2$.
Using again the product structure, we can glue the connecting
orbits to create a nonwandering trajectory from $U_1$ to $U_2$.
\end{proof}

\begin{rem}\label{transitivity-remark}\rm We note 
that without any topological assumption on $\Omega_{NF}$,
the same argument gives transitivity of the geodesic flow 
on the closure of the set of periodic hyperbolic vectors.  
\end{rem}


\subsection{Density of Dirac measures on periodic orbits } \label{proofs}

Let us now prove proposition \ref{densite-orbites-periodiques}, that states
the following:

\smallskip 

\noindent\emph{%
Let $M$ be a connected, complete, nonpositively curved manifold, 
which admits at least three different periodic orbits that do not
bound a flat strip. Assume
that $\Omega_{NF}$ is open in $\Omega$. 
Then the Dirac measures supported by the periodic orbits
of the geodesic flow $(g^t)_{t\in\R}$  that are in $\Omega_{NF}$, 
are dense in the set of all invariant probability measures defined 
on $\Omega_{NF}$.  }

\begin{proof} 
We first show that Dirac measures on periodic orbits not bounding
a flat strip are dense in the set of ergodic invariant probability measures
on $\Omega_{NF}$. 

Let $\mu$ be an ergodic invariant probability measure supported
by $\Omega_{NF}$. By Poincar\'e and Birkhoff theorems, 
$\mu$-almost all vectors are recurrent and generic w.r.t. $\mu$. 
Let $v\in\Omega_{NF}$ be such a recurrent generic vector w.r.t. $\mu$
that belongs to $\Omega_{NF}$. The closing lemma \ref{weak-closing-lemma}
gives a periodic orbit close to $v$.
Since $\Omega_{NF}$ is open in $\Omega$,
that periodic orbit is in fact in $\Omega_{NF}$.
The Dirac measure on that orbit is close to $\mu$ and the claim is proven.

\medskip 

The set $\mathcal{M}^1(\Omega)$ is the convex hull of the 
set of invariant ergodic probability measures, 
so the set of convex combinations of periodic
measures not bounding a flat strip is dense in the set of 
all invariant probability measures on $\Omega_{NF}$. 
It is therefore enough to prove that periodic
measures not bounding a flat strip are dense in the set of convex
combinations of such measures.
The argument follows \cite{CS}, with some subtle differences.

\medskip

Let $x_1$, $x_3$, ..., $x_{2n-1}$ be periodic vectors of $\Omega_{NF}$
with periods $l_1$, $l_3$,..., $l_{2n-1}$, and $c_1$, $c_3$,..., $c_{2n-1}$
positive real numbers with $\Sigma\, c_{2i+1}=1$. Let us denote the
Dirac measure on the orbit of a periodic vector $p$ by 
$\delta_p$. We want to find a periodic vector $p$ such that 
$\delta_p$ is close to the sum $\Sigma\, c_{2i+1}\,\delta_{x_{2i+1}}$.
The numbers $c_{2i+1}$ may be assumed to be rational numbers of the 
form $p_{2i+1}/q$. Recall that the $x_i$ are in fact in $\Omega_{hyp}$.

\smallskip

The flow is transitive on $\Omega_{NF}$ (lemma \ref{transitivity}),
hence for all $i$, there is a vector $x_{2i}\in \Omega_{NF}$
close to $x_{2i-1}$ whose trajectory becomes close to $x_{2i+1}$,
say, after time $t_{2i}$. We can also find a point $x_{2n}$ close to
$x_{2n-1}$ whose trajectory becomes close to $x_1$ after some time.
The proof of lemma \ref{transitivity} actually tells us that the
$x_{2i}$ can be chosen in $\Omega_{hyp}$.

\medskip


\begin{center}
\epsfig{figure=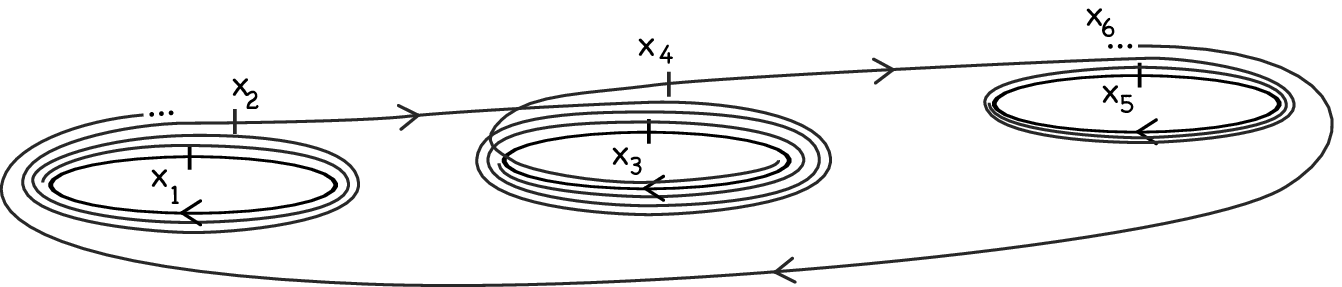,width=0.8\textwidth,angle=0}
\end{center}

\medskip

Now these trajectories can be  glued together, 
using the local product on $\Omega_{hyp}$  
(lemma \ref{weak-local-product}) in the neighbourhood 
of each $x_{2i+1}\in \Omega_{hyp}$, as follows:
we fix an integer $N$, large enough. 
First glue the piece of periodic orbit starting
from $x_1$, of length $Nl_1p_1$, together with the orbit of $x_2$, of
length $t_2$. The resulting orbit ends in a neighbourhood of $x_3$, and
that neighbourhood does not depend on the value of $N$. This orbit is
glued with the trajectory starting from $x_3$, of length $Nl_2p_2$,
and so on (See \cite{cou} for details). 

\medskip

We end up with a vector close to $x_1$, whose trajectory is
negatively asymptotic to the trajectory of $x_1$, then turns $Np_1$ times
around the first periodic orbit, follows the trajectory of
$x_2$ until it reaches $x_3$; then it turns $Np_3$ times around the
second periodic orbit, and so on, until it reaches $x_{2n}$ and goes
back to $x_1$, winding up on the trajectory of $x_1$. 
The resulting trajectory is in $T_{hyp}$ and,
repeating the argument from Lemma \ref{transitivity}, 
we see that it is nonwandering.

\medskip

Finally, we use the closing lemma on $\Omega_{NF}$ to
obtain a periodic orbit in $\Omega_{NF}$.
When $N$ is large, the time spent going from one periodic orbit to another
is small with respect to the time winding up around the periodic orbits,
so the Dirac measure on the resulting periodic orbit is close to the sum
$\sum_{i}c_{2i+1}\delta_{x_{2i+1}}$ and the theorem is proven.

\medskip


 \begin{center}
 \epsfig{figure=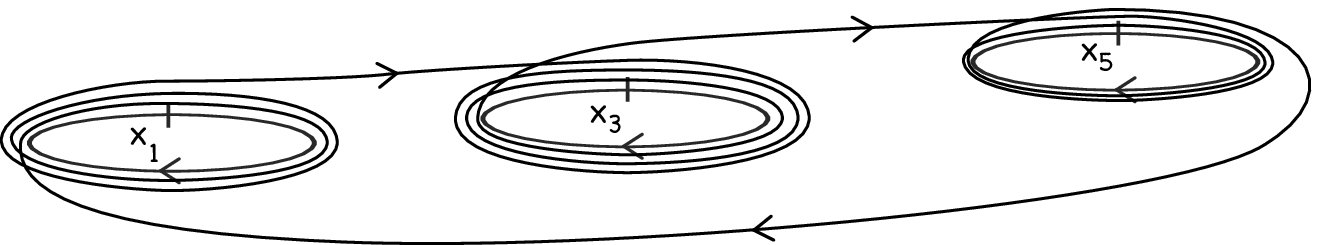,width=0.8\textwidth,angle=0}
 \end{center}

\end{proof}

The proof of theorem \ref{ergodicity-generic} is then straightforward
and follows verbatim from the arguments given in \cite{CS}.
We sketch the proof for the comfort of the reader.

\begin{proof}   
Proposition \ref{densite-orbites-periodiques}  ensures that 
ergodic measures are dense in the set of probability 
measures on $\Omega_{NF}$. 
The fact that they form a $G_\delta$-set is well known.

The fact that invariant measures of full support are a dense $G_\delta$-subset
of the set of invariant probability measures on $\Omega_{NF} $ is 
a simple corollary of the density of periodic orbits
in $\Omega_{NF}$, which itself follows from the closing lemma.

Finally, the intersection of two dense $G_\delta$-subsets 
of $\mathcal{M}^1(\Omega_{NF})$ is
still a dense $G_\delta$-subset of $\mathcal{M}^1(\Omega_{NF})$, 
because this set has the Baire property. 
This concludes the proof. 
\end{proof}


\subsection{Examples}\label{examples}

We now build examples for which the hypotheses or results
presented in that article do not hold.

\medskip

We start by an example of a surface for which $\Omega_{NF}$ is not open in
$\Omega$. First we consider a surface made up of 
an euclidean cylinder put on an euclidean plane. Such surface is built
by considering an horizontal line and a vertical line 
in the plane, and connecting them with a convex arc that is 
infinitesimally flat at its ends.
The profile thus obtained is then rotated along the vertical axis.
The negatively curved part is greyed in the figure below.


 \begin{center}
 \epsfig{figure=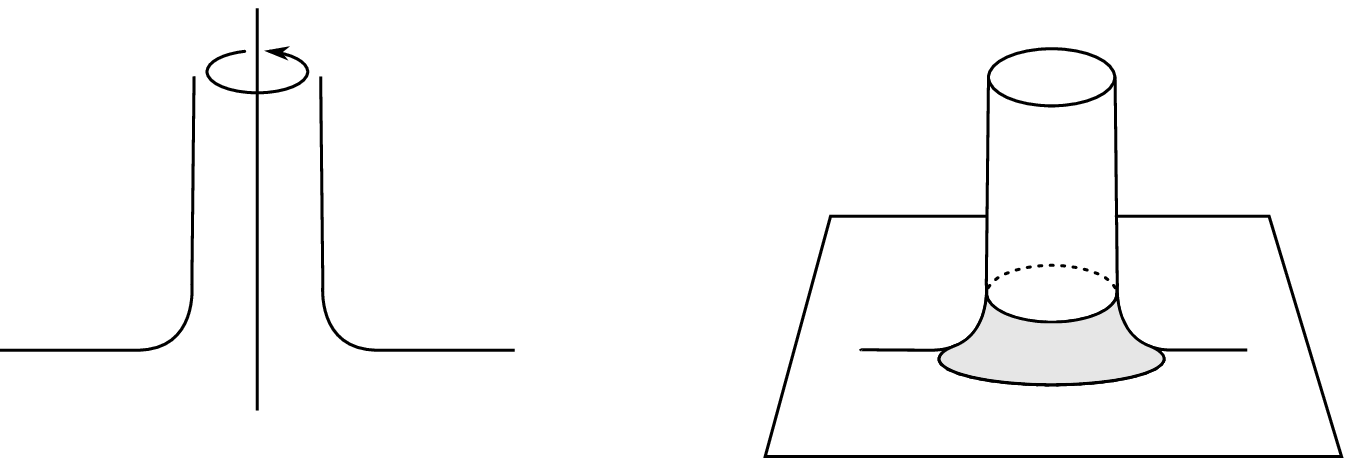,width=0.8\textwidth,angle=0}
 \end{center}

We can repeat that construction so as to line up cylinders
on a plane. Let us use cylinders of the same size and shape,
and take them equally spaced.
The quotient of that surface by the natural ${\bf Z}$-action
is a pair of pants, its three ends being euclidean flat cylinders.

 \begin{center}
 \epsfig{figure=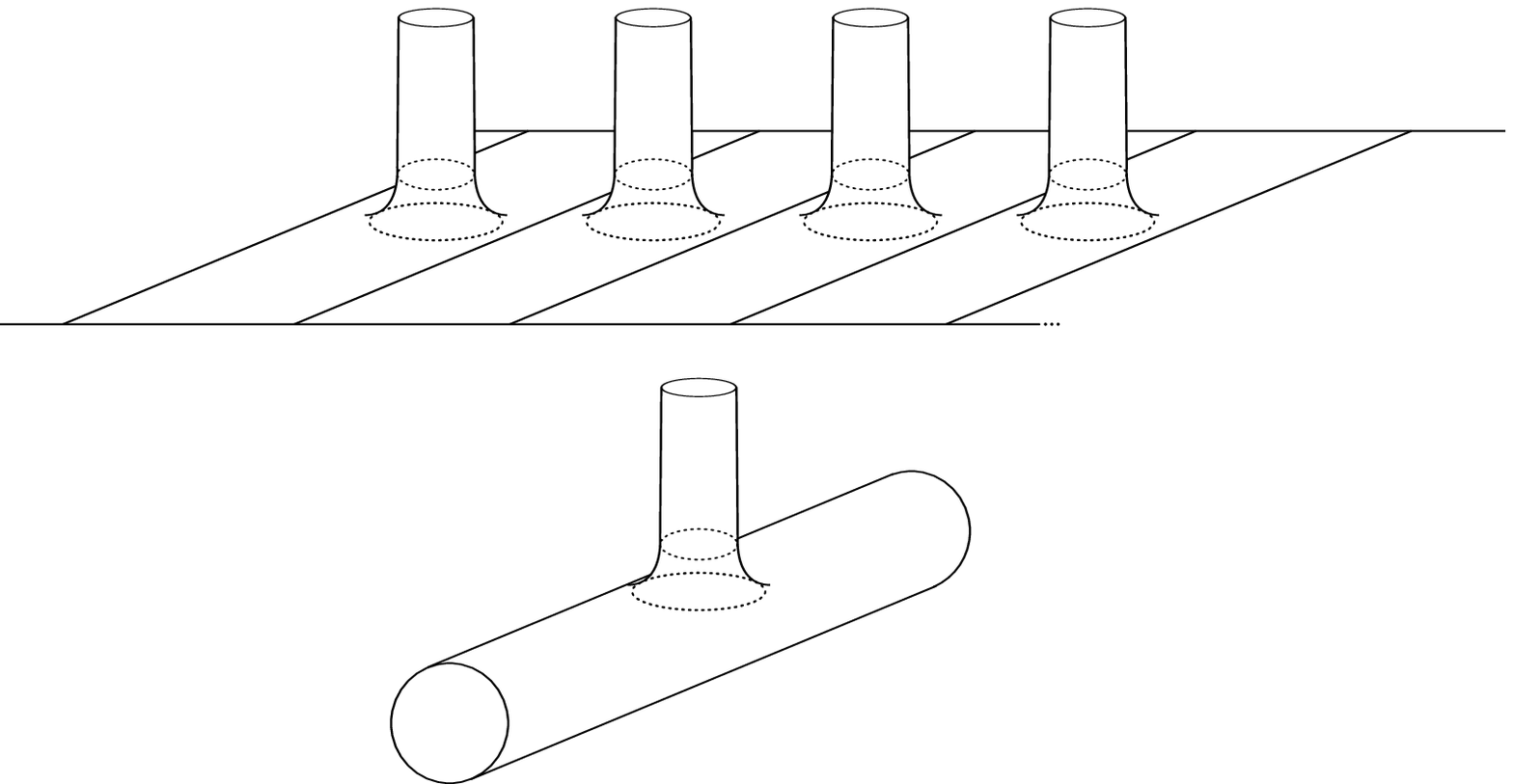,width=0.8\textwidth,angle=0}
 \end{center}

These cylinders are bounded by three closed geodesics that are 
accumulated by points of negative curvature.
The nonwandering set of the ${\bf Z}$-cover is the inverse image
of the nonwandering set of the pair of pants.
As a result, the lift of the three closed geodesics to the ${\bf Z}$-cover
are nonwandering geodesics. They are in fact accumulated by 
periodic geodesics turning around the cylinders a few times
in the negatively curved part, cf \cite{CS2}, th. 4.2 ff.
We end up with a row of cylinders on a strip bounded by two
nonwandering geodesics. These are the building blocks for our
example.

\medskip

We start from an euclidean half-plane and pile up alternatively
rows of cylinders with bounding geodesics $\gamma_i$ and $\gamma'_i$,
and euclidean flat strips. We choose the width so that
the total sum of the widths of all strips is converging.
We also increase the spacing between the cylinders from
one strip to another so as to insure that they do not accumulate 
on the surface.
The next picture is a top view of our surface, cylinders appear
as circles.

 \begin{center}
 \epsfig{figure=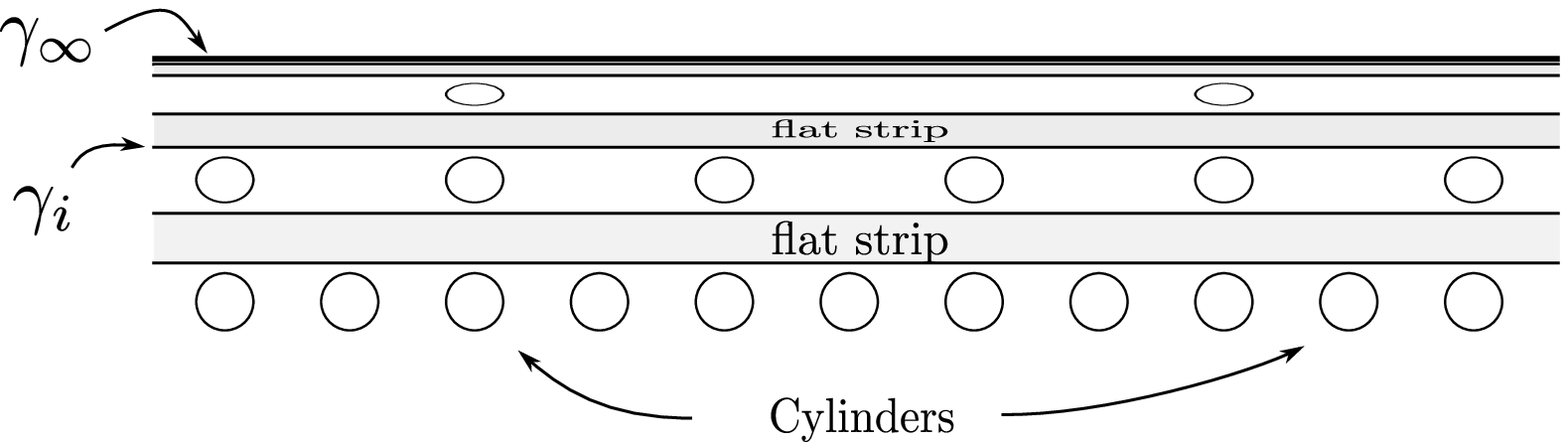,width=0.8\textwidth,angle=0}
 \end{center}

All the strips accumulate on a geodesic $\gamma_\infty$
that is nonwandering because it is in the closure of the periodic geodesics.
We can insure that it does not bound a flat strip by 
mirroring the construction on the other side of $\gamma_\infty$.
So $\gamma_\infty$ is in $\Omega_{NF}$, and is approximated by 
geodesics $\gamma_i$ that belong to $\Omega$ and bound a flat strip.
Thus, $\Omega_{NF}$ is not open in $\Omega$.
We conjecture  that ergodicity is a generic property in the set
of all probability measures invariant by the geodesic flow on that surface.  
The flat strips should not matter here
since they do not contain recurrent trajectories, but our method does not apply
to that example.

\bigskip

The next example, due to Gromov \cite{Gromov}, is detailed in
 \cite{Eberlein80} or \cite{Knieper}. Let $T_1$ be a torus with one
hole, whose boundary is homeomorphic to $S^1$, endowed with a nonpositively
curved metric, negative far from the boundary, and zero on a flat cylinder
homotopic to the boundary. Let $M_1=T_1\times S^1$. Similarly, let $T_2$ be the
image of $T_1$ under the symmetry with respect to a plane containing $\partial
T_1$, and $M_2=S^1\times T_2$. The manifolds $M_1$ and $M_2$ are
$3$-dimensional manifolds whose boundary is a euclidean torus. We glue them
along this boundary to get a closed manifold $M$ which contains around the
place of gluing a thickened flat torus, isometric to $[-r,r]\times
\mathbb{T}^2$, for some $r>0$.

\begin{figure}[ht!]
\begin{center}
\input{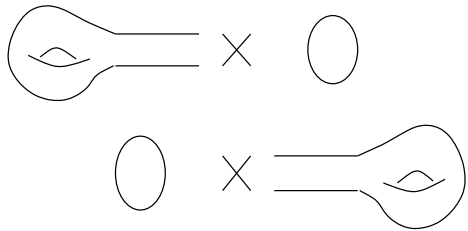}
\caption{Manifold containing a thickened torus}
\label{tore}
\end{center}
\end{figure}

Consider the flat $2$-dimensional torus $\{0\}\times \mathbb{T}^2$ embedded in
$M$. Choose an irrational direction $\{\theta\}$ on its unit tangent bundle
and lift the normalized Lebesgue measure of the flat torus to the invariant
set of unit tangent vectors pointing in this irrational direction $\theta$.
This measure is an ergodic invariant probability measure on $T^1M$, and the
argument given in \cite{CS2} shows that it is not in the closure of the set of
invariant ergodic probability measures of full support. In particular, ergodic
measures are not dense, and therefore not generic. Note also that this
measure is in the closure of the Dirac orbits supported by periodic orbits
bounding flat strips (we just approximate $\theta$ by a rational number),
but cannot be approximated by Dirac orbits
on periodic trajectories that do not bound flat strips.

This does not contradict our results though,
because this measure is supported in $\Omega\setminus\Omega_{NF}$ 
(which is closed).


\section{Measures with zero entropy}\label{section-entropy}

\subsection{Measure-theoretic entropy}

Let $X$ be a Polish space, $(\phi^t)_{t\in\R}$ a continuous flow on $X$, 
and $\mu$ a Borel invariant probability measure on $X$. 
As the  measure theoretic entropy satisfies the relation
 $h_{\mu}(\phi^t)=|t|h_{\mu}(\phi^1)$, 
we   define here the entropy of the application $T:=\phi^1$. 

\begin{defin} Let $\mathcal{P}=\{P_1,\dots, P_K\}$ 
be a finite partition of $X$ into Borel sets. 
The entropy of the partition $\mathcal{P}$ is the quantity 
$$ 
H_\mu(\mathcal{P})=-\sum_{P\in\mathcal{P}}\mu(P)\log \mu(P)\,.
$$
Denote by $\vee_{i=0}^{n-1}T^{-i}\mathcal{P}$ the finite
 partition into sets of the form 
$P_{i_1}\cap T^{-1}P_{i_2}\cap\dots \cap T^{-n+1}P_{i_n}$.
The {\em measure theoretic entropy of $T=\phi^1$ w.r.t. 
the partition $\mathcal{P}$} is defined by the limit
\begin{equation}\label{entropy}
h_{\mu}(\phi^1,\mathcal{P})=
\lim_{n\to\infty}\frac{1}{n}H_{\mu}(\vee_{i=0}^{n-1}T^{-i}\mathcal{P})\,.
\end{equation}
The {\em measure theoretic entropy of $T=\phi^1$} is 
defined as the supremum
$$
h_{\mu}(\phi^1)=\sup\{ h_\mu(\phi^1,\mathcal{P}),\,\,\mathcal{P}\mbox{ finite partition}\,\}$$
\end{defin}

The following result is classical \cite{wal}.

\begin{prop} Let $(\mathcal{P}_k)_{k\in\N}$
 be a increasing sequence of finite partitions of $X$ 
into Borel sets such that 
$\vee_{k=0}^\infty \mathcal{P}_k$  generates the Borel $\sigma$-algebra of $X$. 
Then the measure theoretic entropy of $\phi^1$ satisfies 
$$
h_{\mu}(\phi^1)=\sup_{k\in\N} h_{\mu}(\phi^1,\mathcal{P}_k)\,.
$$
\end{prop}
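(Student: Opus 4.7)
The strategy is the classical Kolmogorov--Sinai approach. First, since each $\mathcal{P}_k$ is itself a finite Borel partition, the inequality $\sup_{k\in\N} h_\mu(\phi^1,\mathcal{P}_k) \le h_\mu(\phi^1)$ is immediate from the very definition of $h_\mu(\phi^1)$ as a supremum over all finite partitions. Moreover, since the sequence $(\mathcal{P}_k)$ is increasing, the map $k\mapsto h_\mu(\phi^1,\mathcal{P}_k)$ is nondecreasing (refining a partition can only increase its dynamical entropy), so this supremum is actually a limit.

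For the reverse inequality, I would fix an arbitrary finite Borel partition $\mathcal{Q}=\{Q_1,\dots,Q_m\}$ of $X$ and show that $h_\mu(\phi^1,\mathcal{Q}) \le \sup_k h_\mu(\phi^1,\mathcal{P}_k)$. The main tool is the classical inequality
$$h_\mu(T,\mathcal{Q}) \le h_\mu(T,\mathcal{P}) + H_\mu(\mathcal{Q}\mid \mathcal{P}),$$
where $H_\mu(\mathcal{Q}\mid\mathcal{P}) = -\sum_{P,Q} \mu(P\cap Q)\log\bigl(\mu(P\cap Q)/\mu(P)\bigr)$ is the conditional entropy (see \cite{wal}). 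Specializing $\mathcal{P}=\mathcal{P}_k$ gives
$$h_\mu(\phi^1,\mathcal{Q}) \le h_\mu(\phi^1,\mathcal{P}_k) + H_\mu(\mathcal{Q}\mid \mathcal{P}_k),$$
and taking the supremum over $k$ on the right, the conclusion follows once one has established that $H_\mu(\mathcal{Q}\mid \mathcal{P}_k) \to 0$ as $k\to\infty$.

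This last convergence is the main (and only) technical point of the proof, and it is exactly where the generating hypothesis enters. Since $\bigvee_k \mathcal{P}_k$ generates the Borel $\sigma$-algebra, each atom $Q_j$ of $\mathcal{Q}$ can be approximated in $\mu$-measure, to arbitrary precision, by a union of atoms of $\mathcal{P}_k$ for $k$ large enough; combined with the continuity of $x\mapsto -x\log x$ at zero, this gives $H_\mu(\mathcal{Q}\mid\mathcal{P}_k)\to 0$. Equivalently, one may invoke the martingale convergence theorem, which shows that the conditional expectations with respect to $\sigma(\mathcal{P}_k)$ converge almost surely to the identity, whence $H_\mu(\mathcal{Q}\mid\mathcal{P}_k)\to H_\mu(\mathcal{Q}\mid\mathcal{B})=0$ since $\mathcal{Q}$ is already Borel-measurable. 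Taking the supremum over $\mathcal{Q}$ then yields $h_\mu(\phi^1) \le \sup_k h_\mu(\phi^1,\mathcal{P}_k)$, completing the proof.
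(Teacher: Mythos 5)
The paper offers no proof of this proposition; it is stated as classical with a citation to Walters' textbook. Your argument is precisely the standard Kolmogorov--Sinai proof that the citation points to: the easy direction by definition of the supremum, the hard direction via the conditional-entropy inequality $h_\mu(T,\mathcal{Q})\le h_\mu(T,\mathcal{P}_k)+H_\mu(\mathcal{Q}\mid\mathcal{P}_k)$ together with $H_\mu(\mathcal{Q}\mid\mathcal{P}_k)\to 0$ from the generating hypothesis (by approximation of atoms in measure, or more cleanly by martingale convergence and bounded convergence). The proof is correct and consistent with the approach the paper implicitly invokes.
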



\subsection{Generic measures have zero entropy}

\begin{theo}\label{generic-zero-entropy} Let $M$ be a 
connected, complete, nonpositively curved manifold, 
whose geodesic flow admits at
least three different periodic orbits, that do not bound a flat strip.
Assume  that $\Omega_{NF}$ is open in $\Omega$. 
The set of invariant probability measures on $\Omega_{NF}$ with zero 
entropy is a dense $G_\delta$ subset of the
set $\mathcal{M}^1(\Omega_{NF} )$ of invariant probability
 measures supported in $\Omega_{NF} $. 
\end{theo}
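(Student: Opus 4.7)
The plan splits into two ingredients: density of zero-entropy measures in $\mathcal{M}^1(\Omega_{NF})$, and the $G_\delta$ structure. For density, I would invoke Proposition \ref{densite-orbites-periodiques}: Dirac measures on periodic orbits lying in $\Omega_{NF}$ are dense in $\mathcal{M}^1(\Omega_{NF})$. Any such Dirac measure has zero measure-theoretic entropy (the time-one map acts as a rotation on a single circle, and the ambient invariant set is finite-dimensional with purely periodic dynamics), so the zero-entropy set already contains a dense family.

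For the $G_\delta$ property, the strategy is to show that $\mu \mapsto h_\mu(\phi^1)$ is upper semi-continuous on $\mathcal{M}^1(\Omega_{NF})$, which then gives $\{\mu : h_\mu = 0\} = \bigcap_{k\ge 1}\{\mu : h_\mu < 1/k\}$ as a $G_\delta$ intersection of open sets. Using the formula $h_\mu(\phi^1) = \sup_k h_\mu(\phi^1,\mathcal{P}_k)$ for an increasing generating sequence of partitions, one has $\{\mu : h_\mu \le c\} = \bigcap_k \{\mu : h_\mu(\phi^1,\mathcal{P}_k) \le c\}$, so it suffices to prove upper semi-continuity of each $\mu \mapsto h_\mu(\phi^1,\mathcal{P}_k)$. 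Writing $h_\mu(\phi^1,\mathcal{P}_k) = \inf_n \tfrac{1}{n} H_\mu\bigl(\bigvee_{i=0}^{n-1} T^{-i}\mathcal{P}_k\bigr)$ as an infimum reduces this further to continuity of the maps $\mu \mapsto H_\mu\bigl(\bigvee_{i=0}^{n-1} T^{-i}\mathcal{P}_k\bigr)$, which holds provided the atoms of the refined partitions have boundaries that are weak-$\ast$ negligible. Such partitions can be built from the local product coordinates given by the stable and unstable horospheres at points of $\Omega_{NF}$: because no vector in $\Omega_{NF}$ bounds a flat strip (flat-strip theorem, already used in Proposition \ref{recurrents-sont-bons}), these horospheres genuinely separate nearby orbits, and one can arrange partitions transverse to the flow whose boundaries avoid every periodic orbit and hence are universally negligible on the dense set of convex combinations of periodic measures.

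The main obstacle I anticipate is implementing this upper semi-continuity step on a possibly non-compact $\Omega_{NF}$. Classical results guaranteeing USC of entropy (Bowen for compact expansive systems, Misiurewicz for asymptotically $h$-expansive ones) require compactness or uniform expansiveness constants that do not come for free here. I would circumvent this by exhausting $\Omega_{NF}$ by an increasing sequence of compact invariant subsets $K_n$ on which uniform expansiveness is available, establishing USC of entropy on $\mathcal{M}^1(K_n)$, and then controlling the escape of mass of invariant probabilities outside $K_n$ via Poincar\'e recurrence and the $\phi^1$-invariance. An alternative route is to argue that the geodesic flow is asymptotically $h$-expansive on the no-flat-strip locus—a structural fact coming from the flat-strip theorem together with the Knieper-Ballmann theory—which by Misiurewicz's theorem yields upper semi-continuity of entropy directly and bypasses the partition construction entirely. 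Once USC is secured, the $G_\delta$ property follows as above, and combined with the density step this completes the proof.
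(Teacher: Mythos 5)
Your first step---density of zero-entropy measures via Proposition \ref{densite-orbites-periodiques} and the trivial entropy of periodic Dirac measures---matches the paper exactly, and your reduction of the $G_\delta$ step to continuity of $\mu\mapsto H_\mu\bigl(\vee_{j=0}^{n-1}g^{-j}\mathcal{P}_k\bigr)$, hence to weak-$\ast$ negligibility of the partition boundaries, is also the right skeleton. Where you go off track is in how to secure that negligibility and in the role of compactness.

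The paper's key observation is purely geometric and bypasses all of your proposed machinery: choose the atoms of $\mathcal{P}_k$ as intersections of lifts $T^1B$ of small balls $B\subset M$ (radius less than the injectivity radius). Any geodesic meets $\partial B$ in at most two points, so \emph{every} flow trajectory spends Lebesgue-measure-zero time on $\partial(T^1B)$, and a Fubini argument together with flow invariance then forces $\mu(\partial\mathcal{P}_k)=0$ for \emph{every} invariant probability measure $\mu$, not merely for periodic Dirac measures or their convex combinations. This makes $\mu\mapsto H_\mu\bigl(\vee_{j=0}^{n-1}g^{-j}\mathcal{P}_k\bigr)$ continuous on all of $\mathcal{M}^1(\Omega_{NF})$, so the sublevel sets are genuinely open and the $G_\delta$ representation (via Fekete and the fixed reference $\mu_0\in\mathcal{D}$) is immediate. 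No upper-semicontinuity of $h_\mu$ as a function, no expansiveness, no compactness of $\Omega_{NF}$ is required.

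Your proposal, by contrast, only arranges for the partition boundaries to avoid periodic orbits, which gives negligibility for the dense family $\mathcal{D}$ but not for arbitrary invariant measures; continuity at a dense set of points does not make the sublevel sets open, so the $G_\delta$ claim does not follow as stated. Your two fallback routes do not repair this. Exhausting $\Omega_{NF}$ by compact invariant subsets is problematic: on a noncompact $\Omega_{NF}$ there is in general no such exhaustion, and Poincar\'e recurrence controls recurrence of a fixed measure, not the escape of mass along a weakly convergent sequence of measures. Invoking Misiurewicz's theorem also requires compactness of the ambient space, and asserting asymptotic $h$-expansiveness on the no-flat-strip locus of a noncompact manifold is a substantial claim that would itself need a proof you do not supply. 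In short: you identified the right reduction but missed the elementary geometric fact that makes the partition boundaries universally null, and you then spent your effort trying to resolve a compactness difficulty that the paper's construction renders moot.
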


Recall here that on a nonelementary negatively curved manifold, 
$\Omega=\Omega_{NF}$ so that the above theorem applies on the full
nonwandering set $\Omega$. 

The proof below is inspired from the proof of
Sigmund \cite{Sigmund}, who treated
the case of Axiom A flows on compact manifolds, and from results of
Abdenur, Bonatti, Crovisier \cite{ABC}
who considered nonuniformly hyperbolic diffeomorphisms on compact manifolds.
But no compactness assumption is needed in our statement.

\begin{proof} 
Remark first that on any Riemannian manifold $M$, 
if $B=B(x,r)$ is a small ball, $r>0$ being strictly less 
than the injectivity radius of $M$ at the point $x$, any geodesic 
(and in particular any periodic geodesic) intersects the boundary of $B$ 
in at most two points. 
Lift now the ball $B$ to the set $T^1B$ 
of unit tangent vectors of $T^1M$ with base points in $B$. 
Then the Dirac measure supported on 
any periodic geodesic intersecting $B$ 
gives zero measure to the boundary of $T^1B$. \\

Choose a countable family of balls $B_i=B(x_i,r_i)$, with centers dense in
$M$. Subdivide each lift $T^1B_i$ on the unit tangent bundle $T^1M$ into
finitely many balls, and denote by $(\mathcal{B}_j)$ the countable family of
subsets of $T^1M$ that we obtain. Any finite family of such sets
$\mathcal{B}_j$ induces a finite partition of $\Omega_{NFP}$ into Borel sets
(finite intersections of the $\mathcal{B}_j$'s, or their complements ). Denote
by $\mathcal{P}_k$ the finite partition induced by the finite family of sets
$(\mathcal{B}_j)_{0\le j\le k}$. If the family $\mathcal{B}_j$ is well chosen,
the increasing sequence $(\mathcal{P}_k)_{k\in\N}$ is such that
$\vee_{k=0}^\infty \mathcal{P}_k$ generates
the Borel $\sigma$-algebra. \\

Set $X=\Omega_{NF}$. According to proposition
\ref{densite-orbites-periodiques}, the family $\mathcal{D}$ of Dirac measures
supported on periodic orbits of $X$ is dense in $\mathcal{M}^1(X)$.
Denote by $\mathcal{M}^1_Z(X)$ the subset of probability measures
with entropy zero in $\mathcal{M}^1(X)$. 
The family $\mathcal{D}$ of Dirac measures
supported on periodic orbits of $X$ is included 
in $\mathcal{M}^1_Z(X)$, is dense in $\mathcal{M}^1(X)$, 
satisfies $\mu(\partial \mathcal{P}_k)=0$ and
$h_\mu(\mathcal{P}_k)=0$ for all $k\in\N$ and $\mu\in\mathcal{D}$. \\

Fix any $\mu_0\in\mathcal{D}$. 
Note that the limit in (\ref{entropy}) always exists, so 
that it can be replaced by a $\liminf$. 
As $\mu_0$ satisfies $\mu_0(\partial \mathcal{P}_k)=0$, 
if a sequence 
$\mu_i\in\mathcal{M}^1(X)$ converges in the 
weak topology to $\mu_0$, 
it satisfies for all $n\in\N$, 
$H_{\mu_i}(\vee_{j=0}^n g^{-j}\mathcal{P}_k)\to
 H_{\mu_0}(\vee_{j=0}^n g^{-j}\mathcal{P}_k)$
 when $i\to\infty$. 
In particular, the set 
$$
\{\mu\in\mathcal{M}^1(X),\,
H_{\mu}(\vee_{j=0}^n g^{-j}\mathcal{P}_k)<
H_{\mu_0}(\vee_{j=-n}^n g^j\mathcal{P}_k)+\frac{1}{r}\}\,,
$$ 
for $r\in\N^*$, is an open set. 
We deduce that  $\mathcal{M}_Z(X)$ is 
a $G_\delta$-subset of $\mathcal{M}(X)$.
 Indeed, 
\begin{eqnarray*}
\mathcal{M}^1_Z(X)
&=&\{\mu\in\mathcal{M}^1(X), \,h_\mu(g^1)=0=h_{\mu_0}(g^1)\}\\
&=&\cap_{k\in\N}\{\mu\in\mathcal{M}^1(X),\,h_{\mu}(g^1,\mathcal{P}_k)=0
=h_{\mu_0}(g^1,\mathcal{P}_k)\}\\
&=&\cap_{k\in\N}\cap_{r=1}^\infty\{\mu\in\mathcal{M}^1(X),\,0
\le h_{\mu}(g^1,\mathcal{P}_k)<\frac{1}{r}=h_{\mu_0}(g^1,\mathcal{P}_k)+\frac{1}{r}\}\\
&=&\cap_{k\in\N}\cap_{r=1}^\infty\cap_{m=1}^\infty\cup_{n=m}^\infty\\
&\, &\{\mu\in\mathcal{M}^1(X),\,\frac{1}{n+1}
H_{\mu}(\vee_{j=0}^n g^{-j}\mathcal{P}_k)<
\frac{1}{n+1}H_{\mu_0}(\vee_{j=0}^n g^{-j}\mathcal{P}_k)+\frac{1}{r}\,\}\,.
\end{eqnarray*}

The fact that $\mathcal{M}^1_Z(X)$ is dense is obvious 
because it contains the family $\mathcal{D}$ of periodic orbits of $X$. 
\end{proof}


\section{Mixing measures}\label{mixing}

\subsection{Topological mixing}

Let $(\phi^t)_{t\in\R}$ be a continuous flow on a Polish space $X$. 
The flow is said {\em topologically mixing} 
if for all open subsets $U,V$ of $X$, 
there exists $T>0$, such that for all $t\ge T$,
 $\phi^t U\cap V\neq \emptyset$. 
This property is of course stronger than transitivity:
 the flow is {\em transitive} if 
for all open subsets $U,V$ of $X$, 
and all $T>0$, there exists $t\ge T$, $\phi^t U\cap V\neq \emptyset$.
An invariant measure $\mu$ under the flow  
is {\em strongly mixing} if for all Borel sets $A$ and $B$ we have 
$\mu(A\cap \phi^t B)\to \mu(A)\mu(B)$ when $t\to +\infty$. 

\medskip

An invariant measure cannot be strongly mixing if the flow itself 
is not topologically mixing on its support (see e.g. \cite{wal}).
We recall therefore some results about topological mixing, 
which are classical on negatively curved manifolds, and still true here.

\begin{prop}[Ballmann, \cite{ballmann82}, rk 3.6 p. 54 
and cor. 1.4 p.45]
 Let $M$ be a connected rank one manifold, 
such that all tangent vectors are nonwandering ($\Omega=T^1M$). 
Then the geodesic flow is topologically mixing. 
\end{prop}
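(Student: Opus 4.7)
The plan is to upgrade transitivity of the geodesic flow to topological mixing by establishing nonarithmeticity of the length spectrum of rank one periodic orbits, and then producing hitting times dense in a half-line at infinity.

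Under the hypothesis $\Omega = T^1M$ together with the rank one condition, a classical theorem of Ballmann--Eberlein asserts that rank one periodic orbits are dense in $T^1 M$. Such orbits do not bound flat strips, so $\Omega_{NF}$ is dense in $\Omega = T^1M$, is trivially open in $\Omega$, and contains infinitely many distinct periodic orbits. Hence Lemma~\ref{transitivity} immediately gives transitivity of $(g^t)$ on all of $T^1 M$.

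Next I would show that the additive subgroup $L \subset \R$ generated by the periods of rank one closed geodesics is dense in $\R$. Assume for contradiction that $L = c\Z$ for some $c > 0$. Pick rank one periodic vectors $v_1, v_2$ of periods $\ell_1, \ell_2$ whose orbits are distinct and nonopposite; using the weak local product of Lemma~\ref{weak-local-product} as in the proof of Lemma~\ref{transitivity}, construct a heteroclinic vector from $v_1$ to $v_2$ and a second one from $v_2$ back to $v_1$. For each pair $(n,m) \in \N^2$, applying the closing lemma (Lemma~\ref{weak-closing-lemma}) to the pseudo-orbit that winds $n$ times around $v_1$, crosses to $v_2$, winds $m$ times, and returns, yields a genuine periodic orbit of period $\omega_{n,m} = n\ell_1 + m\ell_2 + \tau + \varepsilon_{n,m}$ with $\varepsilon_{n,m} \to 0$. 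Since neither $v_1$ nor $v_2$ bounds a flat strip, the basepoint of the heteroclinic connection can be slid continuously along the strong stable and unstable leaves, so $\tau$ takes uncountably many values in an interval, forcing some $\omega_{n,m} \notin c\Z$ and contradicting $L = c\Z$.

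Finally I would combine these to deduce topological mixing. Given open sets $U, V \subset T^1M$, choose rank one periodic orbits $p_1, \dots, p_k$ whose periods $\ell_1, \dots, \ell_k$ generate a dense subgroup of $\R$ (possible by density of $L$), with $p_1$ visiting $U$ and $p_k$ visiting $V$. Concatenating heteroclinic connections between consecutive $p_i$'s via the local product structure and then shadowing produces, for every $(n_1, \dots, n_k) \in \N^k$, a vector in $U$ whose forward orbit enters $V$ at time $T_{n_1, \dots, n_k} = \sum_i n_i \ell_i + T_0 + o(1)$. Since the $\ell_i$ generate a dense subgroup of $\R$, an elementary Kronecker-type argument shows that these hitting times are dense in $[T_0', \infty)$ for some $T_0'$, so $g^t U \cap V \neq \emptyset$ for every $t$ sufficiently large. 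The main obstacle is the nonarithmeticity step: Lemma~\ref{weak-local-product} only furnishes topological local products, not smooth invariant foliations as in the Anosov setting, so continuous dependence of the transit time $\tau$ on the gluing parameters must be argued directly from the flat-strip-free geometry of the involved periodic orbits.
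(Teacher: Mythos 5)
The paper does not actually prove this proposition; it is quoted from Ballmann's 1982 article with a precise reference, so there is no in-paper proof to compare against. Your proposal therefore has to be judged on its own terms, and it is a genuinely different route from Ballmann's. Ballmann's argument runs directly through the geometry of the ideal boundary and the dynamics of axial rank one isometries; you instead try to transplant the machinery of the paper's later Proposition \ref{mixing-non-arithmeticity} (nonarithmeticity $\Rightarrow$ mixing) together with a new argument for nonarithmeticity. That is a reasonable plan to consider, but as written it has two real gaps.

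First, the assertion that $\Omega_{NF}$ ``is trivially open in $\Omega$'' under the hypotheses $\Omega = T^1M$ and $M$ rank one is unjustified. What is true is that the set of \emph{rank one} nonwandering vectors is open and dense in $\Omega$; but $\Omega_{NF}$ is strictly larger, containing higher-rank nonwandering vectors that do not bound flat strips, and a higher-rank vector in $\Omega_{NF}$ can perfectly well be approximated by vectors that do bound flat strips. The paper goes out of its way to present openness of $\Omega_{NF}$ as an additional technical hypothesis, precisely because it is not automatic. Since Lemma \ref{transitivity}, Lemma \ref{weak-closing-lemma}, and Proposition \ref{mixing-non-arithmeticity} all take this openness as an input, you cannot invoke them without first establishing it (or replacing them by versions that work on the open set $\mathcal R_1 \cap \Omega$, or by invoking Remark \ref{transitivity-remark} for transitivity alone).

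Second, and more seriously, the nonarithmeticity step is the heart of the matter and you acknowledge yourself that it is not complete. The proposed mechanism -- sliding the heteroclinic connection continuously through the weak local product to make the transit time $\tau$ sweep an interval -- has no justification in the nonpositively curved setting, where the product structure is only a topological shadowing statement with no control on how $\tau$ varies, and where the returning orbit may graze flat regions. In fact, since the paper proves that topological mixing on $\Omega_{NF}$ is \emph{equivalent} to nonarithmeticity, your plan reduces Ballmann's theorem to proving nonarithmeticity directly, which is essentially the same difficulty repackaged. Without an independent argument for nonarithmeticity (Ballmann's own proof supplies one implicitly through the boundary dynamics, not through the closing lemma), the reduction does not close the loop.
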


 Also related is the work of M. Babillot \cite{mbab} 
who obtained the mixing of the measure of maximal entropy under
suitable assumptions, with the help of a geometric cross ratio.

\begin{prop}\label{mixing-non-arithmeticity} 
Let $M$ be a connected, complete, nonpositively curved manifold, 
whose geodesic flow admits at least three distinct periodic orbits,
that do not bound a flat strip.
If $\Omega_{NF }$
is open in $\Omega$, then 
the restriction of the geodesic flow to $\Omega_{NF }$
is  topologically mixing iff the length spectrum 
of the geodesic flow restricted to $\Omega_{NF }$ 
is non arithmetic. 
\end{prop}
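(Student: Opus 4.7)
The plan is to prove both implications using the tools already developed: the weak closing lemma on $\Omega_{NF}$ (Lemma~\ref{weak-closing-lemma}), the weak local product on $\Omega_{hyp}$ (Lemma~\ref{weak-local-product}), transitivity (Lemma~\ref{transitivity}), and the gluing construction from the proof of Proposition~\ref{densite-orbites-periodiques}.

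For the direction ``topological mixing $\Rightarrow$ non-arithmeticity'' I argue by contrapositive. Assume that all periods of closed geodesics in $\Omega_{NF}$ lie in $c\Z$ for some $c>0$. Fix any periodic vector $v\in\Omega_{NF}$ and apply the closing lemma with $\varepsilon<c/4$: I obtain a neighborhood $U$ of $v$ and a $\delta>0$ such that, for large $t$, any $w\in U$ with $g^tw\in U$ and $d(w,g^tw)<\delta$ is shadowed by a periodic orbit of period in $(t-\varepsilon,t+\varepsilon)$. Since $\Omega_{NF}$ is open in $\Omega$, that periodic orbit lies in $\Omega_{NF}$, so its period belongs to $c\Z$ and $\mathrm{dist}(t,c\Z)\le\varepsilon$. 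Shrinking $U$ so that its diameter is less than $\delta$, the return times for which $g^tU\cap U\neq\emptyset$ all lie in the $\varepsilon$-neighborhood of $c\Z$; the complementary intervals of length $\ge c-2\varepsilon>0$ are excluded, contradicting topological mixing.

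For the converse, assume non-arithmeticity and let $U,V$ be nonempty open subsets of $\Omega_{NF}$. I wish to produce $T>0$ such that $g^t(U)\cap V\neq\emptyset$ for every $t\ge T$. By density of hyperbolic periodic orbits in $\Omega_{NF}$ (from Lemma~\ref{weak-closing-lemma}), pick periodic vectors $p\in U$ and $q\in V$ lying in $\Omega_{hyp}$. Since the additive group generated by the length spectrum on $\Omega_{NF}$ is dense in $\R$, a standard elementary argument produces finitely many periodic orbits $p_1,\dots,p_k\in\Omega_{hyp}$ with lengths $\ell_1,\dots,\ell_k$ such that the positive semigroup $\{\sum_{i=1}^k n_i\ell_i:n_i\in\N\}$ is $\varepsilon$-dense in a half-line $[N_\varepsilon,+\infty)$. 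Using transitivity (Lemma~\ref{transitivity}), I connect $p\to p_1\to\cdots\to p_k\to q$ by heteroclinic orbits in $T_{hyp}$ and apply the gluing construction of Section~\ref{proofs}, based on the local product structure on $\Omega_{hyp}$ (Lemma~\ref{weak-local-product}): for each multi-index $(n_1,\dots,n_k)$ this yields an orbit segment in $\Omega_{hyp}$ of total length $s+\sum n_i\ell_i$ starting $\varepsilon$-close to $p$ and ending $\varepsilon$-close to $q$, where $s$ is the fixed sum of the lengths of the heteroclinic connections. Given any $t\ge T:=s+N_\varepsilon+1$, I choose the $n_i$ so that $|s+\sum n_i\ell_i-t|\le\varepsilon$; sliding the initial vector along the flow by at most $\varepsilon$ absorbs this length mismatch, and after shrinking $U$ and $V$ slightly at the start, I obtain an honest vector of $U$ whose image under $g^t$ lies in $V$.

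The main obstacle is the matching of an exactly prescribed time $t$ in the converse direction. Unlike in a specification setting, the closing lemma only controls total lengths up to $\varepsilon$, so the final argument must exploit the continuous one-parameter group action as a free real translation, combined with a preliminary shrinkage of $U$ and $V$ to absorb that $\varepsilon$-play. The secondary subtlety is the Diophantine step: non-arithmeticity yields a dense additive subgroup generated by the length spectrum, but one needs the positive semigroup generated by a finite subfamily to be $\varepsilon$-dense on a half-line; this is elementary but must be recorded before the gluing argument proceeds.
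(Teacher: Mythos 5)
Your proposal is correct and follows essentially the same strategy as the paper's proof: the closing lemma on $\Omega_{NF}$ (using openness in $\Omega$ so the shadowing orbits lie in $\Omega_{NF}$) shows that topological mixing forces periodic lengths $\varepsilon$-close to every large time, hence non-arithmeticity, while conversely non-arithmeticity is converted, via winding numbers around periodic orbits of $\Omega_{hyp}$ joined by local-product gluings, into a family of transition times that is $\varepsilon$-dense in a half-line, the flow direction absorbing the final $\varepsilon$-mismatch. The only structural differences are that the paper reduces to the case $U=V$ by transitivity, works with just two periodic orbits $p\in U$ and $p_0$ satisfying $0<nl(p)-ml(p_0)<\varepsilon$, and closes the glued trajectory into a genuine periodic orbit $p_{k_1,k_2}\in\Omega_{NF}$ (making membership in $\Omega_{NF}$ automatic), whereas you keep $U\neq V$, allow a finite chain of auxiliary orbits for the same elementary Diophantine step, and use the glued heteroclinic orbit directly, so you should make explicit that this orbit is nonwandering (hence in $\Omega_{hyp}\subset\Omega_{NF}$) by the loop-closing argument of Lemma \ref{transitivity}, exactly as in the construction of Section \ref{proofs} that you cite.
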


\begin{proof}  
Assume first that the geodesic flow restricted to $\Omega_{NF}$ 
is topologically mixing. The argument is classical. 
Let $u\in \Omega_{NF}$ be a vector, and $\varepsilon>0$. 
Let $\delta>0$ and $U\subset \Omega_{NF}$ 
be a neighbourhood of $u$ of the form $U=B(u,\delta)\cap\Omega_{NF}$ 
where the closing lemma is satisfied (see lemma \ref{weak-closing-lemma}). 

Topological mixing on $\Omega_{NF}$
implies that there exists $T>0$, 
s.t. for all $t\ge T$, $g^tU\cap U\neq\emptyset$.
Thus, for all $t\ge T$  there exists $v\in U\cap g^tU$, 
so that $d(g^tv,v)\le \delta$.

We can apply the closing lemma to $v$, 
and obtain  a periodic orbit of $\Omega_{NF}$ 
of length $t\pm\varepsilon$ shadowing the orbit of $v$ during the time $t$. 
As it is true for all $\varepsilon>0$ and large $t>0$,
 it implies the non arithmeticity of the length spectrum of the geodesic flow
in restriction to $\Omega_{NF}$. \\

We assume now that the length spectrum of the geodesic 
flow restricted to $\Omega_{NF}$ is non arithmetic
and we show that the geodesic flow is topologically mixing. 
In \cite{Dalbo}, she proves this implication on negatively 
curved manifolds, by using intermediate properties 
of the strong foliation. 
We give here a direct argument. 

$\bullet$ First, observe that it is enough to prove that for any
 open set $U\in \Omega_{NF}$, there exists $T>0$, 
such that for all $t\ge T$, $g^tU\cap U\neq \emptyset$. 
Indeed, if $U$, $V$ are two open sets of $\Omega_{NF}$, 
 by transitivity of the flow, there exists $u\in U$ 
and $T_0>0$ s.t. $g^{T_0}u\in V$. Now, by continuity of 
the geodesic flow, we can find a neighbourhood 
$U'$ of $u$ in $U$, such that $g^{T_0}(U')\subset V$. 
If we can prove that for all large $t>0$, 
$g^t(U')\cap U'\neq \emptyset$, we obtain that for 
all large $t>0$, $g^tU\cap V\neq\emptyset$. 

$\bullet$ Fix an open set $U\subset \Omega_{NF}$. 
Periodic  orbits of $\Omega_{hyp}$ are dense in $\Omega_{NF}$. 
Choose a periodic  orbit
$p\in U\cap \Omega_{hyp}$. As $U$ is open, there exists $\varepsilon>0$, 
such that $g^tp\in U$, for all $t\in [-3\varepsilon, 3\varepsilon]$. 
By non arithmeticity of the length spectrum, 
there exists another periodic vector $p_0\in \Omega_{hyp}$, and positive integers 
$n,m\in \Z$, $|nl(p)-ml(p_0)|<\varepsilon$. 
Assume that $0<nl(p)-ml(p_0)<\varepsilon$. 

$\bullet$ By transitivity of the geodesic flow on $\Omega_{NF}$, 
and local product  
choose a vector $v$ negatively asymptotic to the negative geodesic
orbit of  $p$ and positively asymptotic to the geodesic orbit of $p_0$, 
and a vector $w$ negatively asymptotic to the orbit of $p_0$ 
and positively asymptotic 
to the orbit of $p$. By lemma \ref{weak-local-product} (2), 
$v$ and $w$ are in $T_{hyp}$. Moreover, 
they are nonwandering by the same argument as in the proof
 of lemma \ref{transitivity}. 
Using the local product structure and the closing lemma, we can construct 
for all
positive integers $k_1,k_2\in\N^*$ a periodic vector $p_{k_1,k_2}$ at distance 
less than $\varepsilon$ of $p$, whose orbit turns $k_1$ 
times around the orbit of $p$, going from an $\varepsilon$-neighbourhood 
of $p$ to an $\varepsilon$-neigbourhood of $p_0$, 
with a ``travel time'' $\tau_1>0$, turning around the orbit of $p_0$
 $k_2$ times, and coming back to the $\varepsilon$-neighbourhood of $p$, 
with a travel time $\tau_2$. 
Moreover, $\tau_1$ and $\tau_2$ are independent of $k_1,k_2$ 
and depend only on $\varepsilon$, and on the initial choice of $v$ and $w$. 
The period  of $p_{k_1,k_2}$ is 
$k_1l(p)+k_2l(p_0)+C(\tau_1,\tau_2,\varepsilon)$,
 where $C$ is a constant, and $g^\tau p_{k_1,k_2}$ belongs to $U$ 
for all $\tau\in]-\varepsilon,\varepsilon[$. 

$\bullet$ Now, by non arithmeticity, there exists $T>0$ large enough, s.t. 
the set 
$\{k_1l(p)+k_2l(p_0)+C(\tau_1,\tau_2,\varepsilon),\,\, k_1\in\N,\,k_2\in\N\,\}$ 
is $\varepsilon$-dense in $[T,+\infty[$. 
To check it, let $K_0$ be the largest integer such 
that $K_0(nl(p)-ml(p_0))<ml(p_0)$. 
Observe then that for all positive integer $i\ge 1$, 
and all $0\le j\le K_0+1$, the set of points 
$(K_0+i)ml(p_0)+j(nl(p)-ml(p_0))=(K_0+i-j)ml(p_0)+jnl(p)$  
is $\varepsilon$-dense in $[(K_0+i)ml(p_0),(K_0+i+1)ml(p_0)]$.

As $g^\tau p_{k_1,k_2}$ belongs to $U$ for all $\tau\in]-\varepsilon,\varepsilon[$, 
it proves that for all $t\ge T$, $g^tU\cap U\neq \emptyset$. 
 \end{proof}

\subsection{Strong mixing}

Even in the case of a topologically mixing flow, 
generic measures are not strongly mixing, 
according to the following result.

\begin{theo}\label{non-mixing-generic}
Let $(\phi^t)_{t\in\R}$ be a continuous flow on a 
complete separable metric space $X$. 
If the Dirac measures supported by periodic orbits 
are dense in the set of invariant probability measures on $X$, 
 then the set of invariant 
measures which are {\em not} strongly mixing contains 
a dense $G_\delta$-subset of the set
 of invariant probability measures on $X$. 
\end{theo}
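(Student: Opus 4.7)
The strategy is to introduce an auxiliary notion, \emph{rigidity}, which interpolates between ``being a periodic Dirac'' and ``being non-mixing,'' and which has a convenient $G_\delta$ description. Call an invariant probability measure $\mu\in\mathcal{M}^1(X)$ \emph{rigid} if there is a sequence $t_n\to+\infty$ such that
$$
\int f\cdot f\circ\phi^{t_n}\,d\mu \;\longrightarrow\; \int f^2\,d\mu
\qquad\text{for every }f\in C_b(X).
$$
The plan is to show (i) the set of rigid measures is a dense $G_\delta$ subset of $\mathcal{M}^1(X)$, and (ii) any rigid measure that is not a Dirac mass at a fixed point fails to be strongly mixing. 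Intersecting with the open set of measures that are not single-point Diracs then yields the desired dense $G_\delta$ of non-mixing measures.

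For density, the key observation is that every Dirac measure $\delta_p$ on a periodic orbit of period $T>0$ is rigid, because choosing $t_n=nT$ gives $\phi^{t_n}$-invariance of $\delta_p$, hence the relation above holds as an exact equality for all $f$. By the hypothesis of the theorem, such measures are dense in $\mathcal{M}^1(X)$, so rigid measures are too. To obtain the $G_\delta$ structure, I fix a countable family $\{f_k\}_{k\ge 1}$ of bounded Lipschitz functions that is dense (in uniform norm on compact sets, which suffices given the tightness of probability measures on a Polish space). Then $\mu$ is rigid if and only if, for every triple $(m,r,N)\in\N^3$, there exists $t\ge N$ with
$$
\Bigl|\int f_k\cdot f_k\circ\phi^{t}\,d\mu-\int f_k^2\,d\mu\Bigr|<\frac{1}{r}\quad\text{for every }k\le m.
$$
For fixed $(m,r,N,t)$ this is an open condition on $\mu$, so taking the union over $t\ge N$ and then the intersection over $(m,r,N)$ exhibits the set of rigid measures as a $G_\delta$.

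The non-mixing step is short. If $\mu$ is simultaneously rigid (via some sequence $t_n$) and strongly mixing, then passing to the limit along $t_n$ in both the rigidity and the mixing relations yields $\int f^2\,d\mu=(\int f\,d\mu)^2$ for every $f\in C_b(X)$, i.e.\ $\mathrm{Var}_\mu(f)=0$ for every continuous bounded $f$. This forces $\mu$ to be concentrated at a single point, necessarily a fixed point of the flow. Contrapositively, a rigid measure that is not a one-point Dirac is not strongly mixing.

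To conclude, I remove the closed set of Dirac masses at single points from the $G_\delta$ of rigid measures; the complement is open, so the intersection is still $G_\delta$. Density is preserved because the approximating periodic Diracs supplied by the hypothesis are supported on genuine (positive-period) periodic orbits, hence are not single-point Diracs. The resulting dense $G_\delta$ lies entirely in the non-strongly-mixing measures. The only delicate point I expect is the verification that a countable dense family of test functions is enough to detect both rigidity and mixing on a general Polish space; this is handled by standard approximation arguments using that $\{f_k\}$ separates measures and that both properties are closed under uniform limits of test functions.
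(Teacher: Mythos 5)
Your proof is correct, and it takes a genuinely different route from the paper's. The paper constructs a meager $F_\sigma$ set directly containing the strongly mixing measures: it fixes a countable family $\mathcal{A}$ of closed balls, observes that for a periodic measure $\mu_0$ of period $l$ one has $\mu_0(A\cap\phi^{jl}A)=\mu_0(A)$ for all Borel $A$, and then checks that the closed sets
$\{\mu : \mu(F_1)\ge\varepsilon,\ \mu(F_2)\ge\varepsilon,\ \mu(G_n\cap\phi^t G_n)\le r\le\mu(F_1)^2+\eta\ \forall t\ge m\}$
miss every periodic measure because $\mu_0(G_n)-\mu_0(F_1)^2\ge\varepsilon(1-\varepsilon)>\eta$. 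You instead isolate the intermediate notion of \emph{rigidity} (return of the autocorrelation $\int f\cdot f\circ\phi^{t_n}\,d\mu$ to $\int f^2\,d\mu$ along some $t_n\to\infty$), show rigid measures are a $G_\delta$ by a union-over-$t$/intersection-over-$(m,r,N)$ description, note periodic Diracs are rigid, and conclude that a rigid, non-point-mass measure cannot be mixing because $\mathrm{Var}_\mu(f_k)=0$ for a separating family forces $\mu$ to be a point mass. Both proofs rest on the same incompatibility between periodic recurrence and mixing; yours packages it more conceptually and avoids the ad hoc parameter bookkeeping ($\eta<2\varepsilon^2/3$, etc.), while the paper's is more elementary and hands-on. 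Two small remarks. First, the ``delicate point'' you flag at the end — whether the countable test family detects rigidity for all $f\in C_b(X)$ — is not actually needed: your argument only uses that the $G_\delta$ set contains periodic Diracs (true since they satisfy the return property for \emph{every} $f$) and that members of the $G_\delta$ set with nondegenerate support are non-mixing (true since $\{f_k\}$ separates points); you should make explicit, though, that the family is chosen to be uniformly bounded (or truncated) so the quantities $\int f_k\cdot f_k\circ\phi^t\,d\mu$ are well controlled. Second, both your proof and the paper's quietly use the convention that a periodic orbit has positive period, so that the dense periodic Diracs are never one-point Diracs at fixed points; you handle this more explicitly than the paper by excising the closed set of single-point Diracs, which is the right thing to do.
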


This result was first proven by K. R. Parthasarathy in the context of discrete
symbolic dynamical systems \cite{Pa1}.  
We adapt here the argument in the setting of flows. 
Thanks to  proposition \ref{densite-orbites-periodiques}   we obtain:

\begin{corol} Let $M$ be a complete, connected, nonpositively curved manifold 
with at least three different periodic orbits, that do not bound a 
flat strip. If $\Omega_{NF}$ is open in $\Omega$, then 
the set of invariant measures which are {\em not} strongly mixing contains 
a dense $G_\delta$-subset of the set of invariant
probability measures on $\Omega$.
\end{corol}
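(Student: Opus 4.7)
The plan is to exhibit a dense $G_\delta$ subset of $\mathcal{M}^1(X)$ consisting entirely of measures that are not strongly mixing; the Baire theorem applied in the Polish space $\mathcal{M}^1(X)$ then gives the conclusion. Fix a countable family $(f_k)_{k\geq 1}\subset C_b(X)$, dense in $C_b(X)$, with $\|f_k\|_\infty\leq 1$, and set
$$C_k(\mu,t):=\int f_k\cdot f_k\circ\phi^t\,d\mu - \left(\int f_k\,d\mu\right)^2.$$
For each $k$ and each $t\geq 0$, the map $\mu\mapsto C_k(\mu,t)$ is continuous for the weak topology, and $\mu$ is strongly mixing if and only if $C_k(\mu,t)\to 0$ as $t\to+\infty$ for every $k$. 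The key property of a periodic orbit measure $\mu_p$ of period $T_p>0$ is that $C_k(\mu_p,mT_p)=\mathrm{Var}_{\mu_p}(f_k)$ for every $m\in\N$, and this variance is strictly positive for at least one $k$ as soon as $\mu_p$ is not concentrated at a single fixed point.

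Enumerate the periodic orbit measures as $(\mu_{p_j})_{j\geq 1}$ and enumerate the pairs $(k,r)\in\N^2$ in any fixed order such that each pair has only finitely many predecessors. For each $j$, let $(k_j,r_j)$ be the first pair in this enumeration for which $\mathrm{Var}_{\mu_{p_j}}(f_{k_j})>1/r_j$. For each $j$ and each $N\geq 1$, set $t_j(N):=\lceil N/T_{p_j}\rceil\,T_{p_j}\geq N$ and, using continuity of $\mu\mapsto C_{k_j}(\mu,t_j(N))$, choose an open neighborhood $W_j(N)$ of $\mu_{p_j}$, of diameter less than $1/N$ in a fixed metric compatible with the weak topology, on which $|C_{k_j}(\cdot,t_j(N))|>1/(2r_j)$. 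Put $U_N:=\bigcup_j W_j(N)$ and $G:=\bigcap_N U_N$. Each $U_N$ is open and contains the dense family $(\mu_{p_j})_{j\geq 1}$, hence is open and dense, and Baire's theorem yields that $G$ is a dense $G_\delta$ in $\mathcal{M}^1(X)$.

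It remains to verify that every $\mu\in G$ fails to be strongly mixing. Fix $\mu\in G$; for each $N$ there exists $j_N$ with $\mu\in W_{j_N}(N)$, whence $\mu_{p_{j_N}}\to\mu$ weakly. If $\mu$ were a Dirac mass at a fixed point then every $C_k(\mu,\cdot)$ would vanish identically, contradicting $|C_{k_{j_N}}(\mu,t_{j_N}(N))|>1/(2r_{j_N})>0$; hence $\mu$ is not a fixed-point Dirac and $\mathrm{Var}_\mu(f_{k^*})=v>0$ for some $k^*$. Choose any $r^*\geq\lceil 2/v\rceil$; by weak continuity of the variance, $\mathrm{Var}_{\mu_{p_{j_N}}}(f_{k^*})>1/r^*$ for all $N$ large, so $(k^*,r^*)$ is a valid pair for $\mu_{p_{j_N}}$, and by minimality $(k_{j_N},r_{j_N})$ lies in the finite set of predecessors of $(k^*,r^*)$ in the enumeration. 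Some pair $(k_0,r_0)$ therefore recurs infinitely often, providing times $t_l\to+\infty$ with $|C_{k_0}(\mu,t_l)|>1/(2r_0)$, which contradicts strong mixing of $\mu$ for the function $f_{k_0}$.

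The main obstacle is that the naive description of the non-mixing set, $\bigcup_k\{\mu:\limsup_t|C_k(\mu,t)|>0\}$, is only a countable union of $G_\delta$'s and need not be a single $G_\delta$. The device of attaching to each periodic measure one discrete witness $(k_j,r_j)$ via a well-ordering of $\N^2$, combined with continuity of the variance under weak convergence, is what forces the witness to stabilize along any sequence of periodic approximants and thereby converts the existential quantifier ``there exists $k$'' into an honest Baire-category $G_\delta$ construction.
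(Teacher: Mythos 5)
Your proposal is correct and takes a genuinely different route from the paper's. The paper proves Theorem~\ref{non-mixing-generic} ``from above'': it shows that the set of strongly mixing measures is contained in a countable union of closed sets $\mathcal{E}(F_1,F_2,G_n,\varepsilon,r,m)=\bigcap_{t\ge m}\{\mu:\mu(F_1)\ge\varepsilon,\ \mu(F_2)\ge\varepsilon,\ \mu(G_n\cap\phi^t G_n)\le r\le\mu^2(F_1)+\eta\}$, indexed by rational data, and shows each of these closed sets avoids every periodic orbit measure (using the identity $\mu_0(A\cap\phi^{jl}A)=\mu_0(A)$ at multiples of the period) and therefore, by density of Dirac periodic measures from proposition~\ref{densite-orbites-periodiques}, has empty interior; the conclusion follows since a meager set has a complement containing a dense $G_\delta$. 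You instead build the dense $G_\delta$ ``from below'': $G=\bigcap_N U_N$ with $U_N$ a union of small neighbourhoods of periodic measures on which a specific correlation $C_{k_j}(\cdot,t_j(N))$ is bounded away from zero at a time $t_j(N)\ge N$ which is a multiple of the period. The nontrivial part of your argument --- and its main contribution over a naive attempt --- is the well-ordering of pairs $(k,r)$ to pick, for each periodic measure, a canonical non-mixing witness, combined with weak continuity of variance to force this witness to stabilize along the sequence $\mu_{p_{j_N}}\to\mu$; this correctly converts the existential ``for some $k$'' into a $G_\delta$ construction, which is exactly the point you flag at the end. Both routes ultimately rest on the same input (density of periodic orbit Dirac measures and the fact that a periodic measure has recurrent positive correlation at multiples of its period), so neither is fundamentally stronger, but the paper's is a one-step meagerness bound while yours gives an explicit residual set at the cost of the stabilization argument.

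Two small points of precision. First, for a general Polish space $C_b(X)$ is not separable in the sup norm, so you cannot literally take $(f_k)$ dense in $C_b(X)$; what you need (and what suffices) is a countable family of bounded Lipschitz functions that both metrizes the weak topology on $\mathcal{M}^1(X)$ and separates points of $X$ --- the latter guarantees that a non-Dirac invariant measure has positive variance for some $f_{k}$, and only the implication ``strong mixing $\Rightarrow C_k(\mu,t)\to 0$'' is used, so the stated ``if and only if'' is not needed. Second, the density proposition~\ref{densite-orbites-periodiques} holds in $\mathcal{M}^1(\Omega_{NF})$, so your $X$ should be $\Omega_{NF}$; the corollary as printed says $\Omega$, which appears to be a slip in the paper rather than in your argument, but you should make the choice of ambient space explicit.
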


\begin{proof} 
Choose a countable dense set of points $\{x_i\}$, and let
$\mathcal{A}$ be the countable family of all closed balls of rational radius %
centered at a point $x_i$. This family generates the Borel $\sigma-$algebra
of $T^1M$. A measure $\mu$ is a strongly mixing measure 
if for any set $F\in\mathcal{A}$ such that 
$\mu(F)>0$, we have $\mu(F\cap \phi^t F)\to \mu(F)^2$ when $t\to\infty$.

\medskip

\noindent
For any subset $F_1\in \mathcal{A}$, 
let $G_n=V_{\frac{1}{n}}(F_1)$ be a decreasing sequence 
of open neighbourhoods of $F_1$ with intersection $F_1$. 
The set of strongly mixing measures is included in the following union 
(where all indices $n, \varepsilon, \eta, r$ are rational numbers, 
$t$ is a real number and $F_1,F_2$ are disjoint)

$$ \bigcup_{F_1 ,F_2\in\mathcal{A}}\ 
   \bigcup_{n\in\N^*}\ 
   \bigcup_{\varepsilon\in(0,1)}\ 
   \bigcup_{0<\eta<2\varepsilon^2/3}\
   \bigcup_{r\in(0,1)}\ 
   \bigcup_{m\in\N}\ 
   \bigcap_{t\ge m}\\
   A_{F_1,F_2,n,\varepsilon,\eta,r,m,t}
$$
with $A_{F_1,F_2,n,\varepsilon,\eta,r,m,t} \subset \mathcal{M}(X)$ given by
$$
\{\mu\in\mathcal{M}(X)\mid \mu(F_1)\ge\varepsilon,\,\mu(F_2)\ge\varepsilon,\,\,
\mu(G_n\cap \phi^k G_n)\le r, \,r\le \mu^2(F_1)+\eta\}\,.
$$
This set is closed, because $G_n$ is an open set, and
$F_1,F_2$ are closed. (The second closed set $F_2$ is disjoint from $F_1$ and
is just used to guarantee that $F_1$ is not of full measure). The intersection
of all such sets over all $t\ge m $ is still closed.
The set of strongly mixing measures is therefore 
included in a countable union of closed sets. 

\medskip

Let us show that each of these closed sets has empty interior. 
Denote by $\mathcal{E}(F_1,F_2,G_n,\varepsilon,r,m)$ the set
$$
\bigcap_{t\geq m}\ 
\{\mu\in\mathcal{M}(X) \mid \mu(F_1)\ge\varepsilon,\,\mu(F_2)\ge\varepsilon,\,
\mu(G_n\cap \phi^t G_n)\le r, \,r\le \mu^2(F_1)+\eta\}.
$$
It is enough to show that its complement contains all periodic measures.
Remark first that if $\mu$ is a Dirac measure supported on a periodic orbit
of length $l$, then for all Borel sets $A\subset X$, and all multiples $jl$ of
the period,
$$
\mu(A\cap \phi^{jl} A)=\mu(A)\,.
$$
In particular, they are obviously not mixing.

\medskip

\noindent
Let $\mu_0$ be a periodic measure of period $l>0$, and $j\ge 1$ an integer
s.t. $jl\ge m$. Let us show that it does not belong to the following set:
$$
\{\mu\in\mathcal{M}(X),\,\mu(F_1)\ge\varepsilon,\, \mu(F_2)\ge\varepsilon,\,
\mu(G_n\cap \phi^{jl} G_n)\le r, \,r\le \mu^2(F_1)+\eta\}.
$$ 
If $\mu_0(F_1)\ge \varepsilon$ and $\mu_0(F_2)\ge \varepsilon$, 
we get $\varepsilon\le \mu_0(F_1)\le 1-\varepsilon$. 
The key property of $\mu_0$ gives $\mu_0(G_n\cap\phi^{jl} G_n)=\mu_0(G_n)$. 
We deduce that
\begin{eqnarray*}
\mu_0(G_n\cap \phi^{jl}G_n)-\mu_0(F_1)^2&=&
\mu_0(G_n)-\mu_0(F_1)^2\ge\mu_0(F_1)(1-\mu_0(F_1)\ge
\varepsilon(1-\varepsilon)\\
&\ge&\varepsilon^2>\frac{3\eta}{2}>\eta
\end{eqnarray*}
so that $\mu_0$ does not belong to the above set. 
In particular, the
 periodic measures do not belong to 
$\mathcal{E}(F_1,F_2,G_n,\varepsilon,r,m)$ and the result is proven. 
\end{proof}


\subsection{Weak mixing}

We end with a question concerning the weak-mixing property.
An invariant measure $\mu$ on $X$ is {\em weakly mixing} if for all 
continuous function with compact support $f$ defined on $X$, we have 
\begin{equation}\label{weakmixing}
\lim_{T\to\infty}\frac{1}{T}\int_0^T
\left|\int_X f\circ\phi^t(x)\,f(x)\,d\mu(x)-\left(\int_X f\,d\mu\right)^2\right|
\,dt =0\,.
\end{equation}

\begin{theo}[Parthasarathy, \cite{Pa2}] Let $(\phi^t)_{t\in\R}$ 
be a continuous flow 
on a Polish space. The set of weakly mixing measures on $X$ 
is a $G_\delta$-subset of the set 
of Borel invariant probability measures on $X$. 
\end{theo}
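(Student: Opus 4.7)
The plan is to reduce weak mixing of $\mu$ to ergodicity of the product measure $\mu \otimes \mu$ under the product flow $(\phi^t \times \phi^t)_{t\in\R}$ on the Polish space $X \times X$. The central classical fact is the equivalence: an invariant probability measure $\mu$ is weakly mixing for $(\phi^t)$ if and only if $\mu \otimes \mu$ is ergodic for $(\phi^t \times \phi^t)$. One direction expands the Ces\`aro average in the definition of weak mixing against product test functions $f \otimes g \in C_b(X \times X)$; the other direction restricts ergodicity of $\mu \otimes \mu$ to observables of the form $f \otimes f$ associated to a single $f \in C_b(X)$.

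First I would verify that the map
$$\Psi : \mathcal{M}^1_{\phi}(X) \longrightarrow \mathcal{M}^1_{\phi \times \phi}(X \times X), \quad \mu \longmapsto \mu \otimes \mu,$$
is well-defined and continuous for the weak topology. Invariance of $\mu \otimes \mu$ under $\phi^t \times \phi^t$ is immediate from Fubini, and continuity of the product-measure map between spaces of probability measures on Polish spaces is standard: test against $f \otimes g$ with $f, g \in C_b(X)$ and extend by a tightness argument to all $h \in C_b(X \times X)$.

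Next I would invoke the classical theorem that the set of ergodic probability measures for a continuous flow on a Polish space is a $G_\delta$ subset of the space of invariant probability measures. One proves this by writing ergodicity as a countable intersection, over a countable dense family $\{f_n\}$ of test functions, of conditions of the form
$$\lim_{T \to \infty}\int_{Y} \Bigl(\frac{1}{T}\int_0^T f_n \circ \psi^s\, ds - \int f_n\, d\nu\Bigr)^2\, d\nu = 0;$$
thanks to the $L^2$-von Neumann ergodic theorem together with the continuity in $\nu$ of the finite-time Birkhoff averages in $L^2$, each such condition is itself a $G_\delta$. Applied to the product flow $(\phi^t \times \phi^t)$ on $X \times X$, this yields a $G_\delta$ subset $\mathcal{E} \subset \mathcal{M}^1_{\phi \times \phi}(X \times X)$ of ergodic measures.

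Finally, $G_\delta$ sets are stable under continuous preimages, so the set of weakly mixing measures, being equal to $\Psi^{-1}(\mathcal{E})$, is a $G_\delta$ subset of $\mathcal{M}^1_{\phi}(X)$. I expect the main obstacle to be the first step, namely the equivalence between weak mixing of $\mu$ and ergodicity of $\mu \otimes \mu$ on a noncompact Polish space: one must argue that functions of the form $f \otimes g$ suffice to detect weak mixing (equivalently, the ergodicity of $\mu \otimes \mu$), which calls for a careful density or tightness argument in $C_b(X \times X)$ rather than a direct Stone--Weierstrass argument on a compact space.
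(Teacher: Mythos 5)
Your proposal is correct in substance and shares the paper's first step, namely the classical equivalence between weak mixing of $\mu$ and ergodicity of $\mu\times\mu$ for the product flow, but it then takes a different route. The paper proves the $G_\delta$ property directly: it fixes a countable algebra of bounded Lipschitz functions on $X\times X$ separating points, which is dense in $L^2(m)$ for \emph{every} Borel probability measure $m$ (the measurable Stone--Weierstrass result of \cite{cou3}), and writes the complement of $\{\mu:\ \mu\times\mu\ \mbox{ergodic}\}$ as a countable union of closed sets $F_{k,l,i}$ describing decompositions $\mu\times\mu=\alpha m_1+(1-\alpha)m_2$ with $\int f_i\,dm_1\ge\int f_i\,dm_2+\frac1l$; you instead realize the weakly mixing measures as the preimage, under the continuous map $\Psi(\mu)=\mu\otimes\mu$, of the $G_\delta$ set of ergodic measures of the product flow, the latter obtained from the von Neumann characterization via Ces\`aro averages. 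Your route buys a cleaner modular structure (continuity of $\Psi$ by tightness, plus the known genericity of ergodic measures, a fact the paper also takes as well known elsewhere), while the paper's argument is self-contained and never needs the continuity of the product map. Two details in your sketch should be tightened, both coming from noncompactness: first, in your $G_\delta$ argument for ergodicity, the ``countable dense family $\{f_n\}$'' cannot be taken dense in $C_b$ (which is not separable on a noncompact Polish space); what is needed is precisely a countable algebra of bounded Lipschitz functions separating points, dense in $L^2(\nu)$ for every $\nu$ --- so the density issue you flag for the equivalence reappears in your step~3 and is resolved by the same device the paper imports from \cite{cou3}; second, to see that each condition $\lim_{T\to\infty}\int\bigl(\frac1T\int_0^T f_n\circ\psi^s\,ds-\int f_n\,d\nu\bigr)^2d\nu=0$ cuts out a $G_\delta$ rather than an $F_{\sigma\delta}$, you must use that the limit exists for every invariant $\nu$ (von Neumann), so it can be replaced by a $\liminf$ over a countable set of times --- the same trick the paper uses in its entropy section.
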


\noindent
Of course, this result applies in our context, 
with $X=\Omega$,  or $X=\Omega_{NF}$. 

In the case of the dynamics on a full shift, 
Parthasarathy proved 
in \cite{Pa2} that there exists a dense subset of strongly mixing measures. 
This result was improved by Sigmund \cite{si} 
who showed that there is a dense subset of Bernoulli measures.
 Of course, these results  imply in particular 
that the above $G_\delta$-set is 
a dense $G_\delta$-subset of $\mathcal{M}(\Omega)$. 
But the methods of \cite{Pa2} and \cite{si} strongly use specific 
properties of  a shift dynamics, 
and seem therefore difficult to generalize. 
In any case, such a result would impose 
to add the assumption that the flow is 
topologically mixing. 

Anyway, the following question is interesting: in the setting of noncompact
rank one manifold, can we find a dense family of weakly mixing measures on
$\Omega_{NF}$ ? Or at least one ? \medskip

We recall briefly the proof of the above theorem for the reader. The arguments
are similar to those of \cite{Pa2}, but our formulation is shorter.

\begin{proof} It is classical that the weak mixing of 
the system $(X,\phi,\mu)$ is equivalent to 
the ergodicity of  $(X\times X, \phi\times\phi, \mu\times\mu)$ 
(see e.g. \cite{wal}).

\medskip

Let $(f_i)_{i\in\N}$ be some countable algebra of Lipschitz bounded functions on
$X\times X$ separating points. Such a family is dense in the set of all bounded
Borel functions, with respect to the $L^2(m)$ norm, for all Borel probability
measures $m$ on $X\times X$ (see \cite{cou3} for a short proof). Now, the
complement of the set of weakly mixing measures $\mu\in\mathcal{M}(X)$
can be written as the union of the following sets:
\begin{eqnarray*}
F_{k,l,i}=\{\mu\in\mathcal{M}(X),\,
\exists \, m_1,m_2\in\mathcal{M}(X\times X), 
\alpha\in[\frac{1}{k},1-\frac{1}{k}], \,s.t.\\
\mu\times\mu=\alpha m_1+(1-\alpha)m_2,\mbox{ and }
\int f_i dm_1\ge\int f_i dm_2+\frac{1}{l}\}\,.
\end{eqnarray*}
We check as in \cite{CS} that these sets are closed, 
so that the weakly mixing measures of $X$ form 
a $G_\delta$-subset of $\mathcal{M}(X)$. 
\end{proof}



\begin{thebibliography}{100}

\small

\bibitem[ABC10]{ABC} F. Abdenur, C. Bonatti, S. Crovisier.
Nonuniform hyperbolicity for $C^1$-generic diffeomorphisms,
\emph{Israel Journal of Maths} {\bf 183} (2011), 1–60.


\bibitem[Ba01]{mbab} M. Babillot. 
On the mixing property for hyperbolic systems
\emph{Israel J. Math.} {\bf 129}  (2002), 61--76.

\bibitem[Ba82]{ballmann82} W. Ballmann,
Axial isometries of manifolds of nonpositive curvature.
\emph{Math. Annalen} {\bf 259} (1982), 131--144. 

\bibitem[Ba95]{ballmann} W. Ballmann. 
Lectures on spaces of nonpositive curvature.  
With an appendix by Misha Brin. 
\emph{DMV Seminar, 25. Birkhäuser Verlag, Basel} (1995).

\bibitem[BB82]{bb} W. Ballmann; M. Brin.
On the ergodicity of geodesic flows.
\emph{Ergodic theory and Dynamical Systems}, (1982) 311--315.

\bibitem[Bi99]{bi} P. Billingsley.
Convergence of probability measures. Second edition.
Wiley Series in Probability and Statistics: Probability and Statistics. 
A Wiley-Interscience Publication. John Wiley \& Sons, Inc., New York, 1999.


\bibitem[C02]{cou3} Y. Coudene.
Une version mesurable du th\'eor\`eme de Stone-Weierstrass.
\emph{Gazette des mathematiciens}, no. {\bf91} (2002), 10--17

\bibitem[C04]{cou} Y. Coudene. 
Topological dynamics and local product structure. 
\emph{J. London Math. Soc. (2)} {\bf 69}, no.2 (2004), 441--456.


\bibitem[CS10]{CS} Y. Coudene, B. Schapira.
Generic measures for hyperbolic flows on non-compact spaces,
\emph{Israel Journal of Maths}, {\bf 179} (2010), 157--172. 

\bibitem[CS11]{CS2} Y. Coudene, B. Schapira.
Counterexamples in nonpositive curvature, to appear in 
\emph{Discrete and continuous dynamical systems} (2011). 

\bibitem[D99]{dalbo} F. Dal'bo.
Remarques sur le spectre des longueurs d'une surface et comptages. 
\emph{ Bol. Soc. Bras. Math.} {\bf 30}, no.2 (1999), 199--221. 

\bibitem[D00]{Dalbo} F. Dal'bo. Topologie du feuilletage fortement stable.
 \emph{Ann. Inst. Fourier} (Grenoble) {\bf  50}, no.3 (2000), 981--993. 


\bibitem[Eb80]{Eberlein80} Eberlein, Patrick B. 
Lattices in spaces of nonpositive curvature.
\emph{Annals of Maths.} {\bf 111} no.3, 435-476. 

\bibitem[E96]{eb1}
P. Eberlein.
Geometry of nonpositively curved manifolds. 
Chicago Lectures in Mathematics. 
University of Chicago Press, Chicago, IL, 1996. vii+449 pp.

\bibitem[Gr78]{Gromov} Gromov, Mikhail.
Manifolds of negative curvature.
\emph{J. Diff. Geom.} {\bf 13} (1978), 231-242. 


\bibitem[K98]{Knieper} G. Knieper.
The uniqueness of the measure of maximal entropy for geodesic flows on rank 1 manifolds. 
\emph{Ann. of Math.} (2) 148, no.1 (1998), 291--314.

\bibitem[K02]{kni}
G. Knieper.
Hyperbolic dynamics and Riemannian geometry.  
Handbook of dynamical systems, Vol. 1A,  453--545, 
North-Holland, Amsterdam, 2002.


\bibitem[Pa61]{Pa1} 
K.R. Parthasarathy.
On the category of ergodic measures. 
\emph{Illinois J. Math.}  {\bf 5}  (1961), 648--656.

\bibitem[Pa62]{Pa2} 
K.R. Parthasarathy.
A note on Mixing Processes.
\emph{Sankhya, The Indian journal of Statistics},
Series A, vol 24, no. 4 (nov. 1962), 331--332. 

\bibitem[P77]{bpes} Y. Pesin.
Geodesic flow on closed Riemannian manifold without focal points.
\emph{Math. USSR Izvestija 11} (1977), 1195--1228.

\bibitem[Si70]{Sigmund} 
K.  Sigmund. 
Generic properties of invariant measures for Axiom A-Diffeomorphisms.
\emph{Inventiones math.} {\bf 11} (1970), 99--109.

\bibitem[Si72]{si}
K. Sigmund. 
On mixing measures for Axiom A Diffeomorphisms.
\emph{Proceedings of the American Math. Soc.}
{\bf 36}, no.2 (1972).

\bibitem[Si72]{si1}
K. Sigmund.
On the space of invariant measures for hyperbolic flows.
\emph{Amer. J. Math.} 94 (1972), 31–37. 

\bibitem[W82]{wal}
P. Walters. 
An introduction to ergodic theory. 
Graduate Texts in Mathematics, 79. Springer-Verlag, New York-Berlin, 1982.

\end{thebibliography}
\end{document}